\newtheorem{theorem}{Theorem}[section]
\newtheorem{corollary}[theorem]{Corollary}
\newtheorem{lemma}[theorem]{Lemma}
\newtheorem{definition}[theorem]{Definition}
\newtheorem{proposition}[theorem]{Proposition}
\newtheorem{remark}[theorem]{Remark}
\numberwithin{equation}{section} 
\let\c@equation\c@theorem 
\DeclareMathOperator{\Hom}{Hom}
\DeclareMathOperator{\Ext}{Ext}
\DeclareMathOperator{\HH}{HH}
\DeclareMathOperator{\Ho}{H}
\DeclareMathOperator{\proj}{proj}
\newcommand{\g}{\mathfrak{g}}
\newcommand{\SL}{\mathfrak{sl}_2}
\newcommand{\Vg}{V(\SL)}
\newcommand{\bk}{\mathbf{k}}
\newcommand{\kG}{\bk G}
\newcommand{\N}{\mathcal{N}}
\newcommand{\K}{\mathcal{K}}
\newcommand{\I}{\mathcal{I}}
\begin{document}

%opening
\title[Finite Generation of Tate Cohomology of Symmetric Hopf Algebras]
{Finite Generation of Tate Cohomology \\
of Symmetric Hopf Algebras}

\author{Van C. Nguyen}
\address{Department of Mathematics\\Texas A\&M University \\College Station, TX 77843}
\email{vcnguyen@math.tamu.edu}

\begin{abstract}
Let $A$ be a finite dimensional symmetric Hopf algebra over a field $\bk$. 
We show that there are $A$-modules whose Tate cohomology is not finitely 
generated over the Tate cohomology ring $\widehat{\Ho}^*(A,\bk)$ 
of $A$. However, we also construct $A$-modules which have finitely generated 
Tate cohomology. It turns out that if a module in a connected component of 
the stable Auslander-Reiten quiver associated to $A$ has finitely generated 
Tate cohomology, then so does every module in that component. We apply some 
of these finite generation results on Tate cohomology to an algebra defined 
by Radford \cite{R} and to the restricted universal enveloping algebra of $\SL(\bk)$. 
\end{abstract}

\maketitle

%%%%%%%%%%%%%%%%%%%%%%%%%%%%%%%%%%%%%%%%%%%%
%%%%%%%%%%%%%%%%%%%%%%%%%%%%%%%%%%%%%%%%%%%%

\section{Introduction}
\label{sec:intro}

Many people have been interested in the finite generation of the cohomology 
of a finite dimensional Hopf algebra $A$. If such property holds, 
one can apply the theory of support varieties to the study of $A$-modules. 
It is known that there are several finite dimensional Hopf algebras whose 
cohomology over their base field $\bk$ is finitely generated, among them are: 
group algebras of finite groups, finite group schemes or equivalently 
finite dimensional co-commutative Hopf algebras, small quantum groups, 
and certain pointed Hopf algebras (see, for example, \cite[Introduction]{MPSW}). 
While the usual cohomology rings of such algebras are finitely generated, 
the same may not be true for their Tate cohomology rings. For example, 
it is shown in \cite{CCM} that the only finite groups $G$ having 
the property that every finitely generated $\kG$-module has finitely 
generated Tate cohomology have $p$-rank one or zero, where $p$ is the 
characteristic of the field $\bk$. The purpose of this paper is to 
investigate the finite generation property for Tate cohomology of 
a finite dimensional symmetric Hopf algebra $A$. If $M$ is a finitely 
generated $A$-module, we want to know whether $\widehat{\Ho}^*(A,M)$ is 
finitely generated as a graded module over $\widehat{\Ho}^*(A,\bk)$. 
While the methods we use here are mostly straightforward generalizations 
of those in \cite{CCM}, some additional assumption is needed to fit 
in the context. For instance, in Proposition \ref{Ben2p5.9.5}, we need 
$A$ to be a Hopf algebra so that tensor products of modules are again 
$A$-modules. Nonetheless, the author believes some of the results in 
this paper hold for finite dimensional symmetric $\bk$-algebras in 
general, not necessarily restricted to Hopf algebras. 

Throughout the paper, let $\bk$ be a field and $A$ be a finite dimensional 
symmetric Hopf algebra over $\bk$ with antipode $S$, coproduct $\Delta$, 
and augmentation $\varepsilon$. All modules are 
finitely generated left modules and tensor product is over $\bk$ unless 
stated otherwise. Via the augmentation map, $\bk$ is considered 
as a trivial $A$-module. We denote the usual cohomology ring 
of $A$ with coefficients in $\bk$ as $\Ho^*(A,\bk) :=\Ext_A^*(\bk,\bk)$. 

Let $M$ be a non-projective (indecomposable) $A$-module. 
The Tate cohomology of $A$ with coefficients in $M$ is: 
\[ \displaystyle \widehat{\Ho}^*(A,M):=\widehat{\Ext}^*_A(\bk,M) \cong 
\bigoplus_{n \in \mathbb{Z}} \underline{\Hom}_A(\Omega^n \bk, M) \cong 
\bigoplus_{n \in \mathbb{Z}} \underline{\Hom}_A(\bk, \Omega^{-n} M), \] 
where $\widehat{\Ext}$ is taken on an $A$-complete resolution of $\bk$, 
$\Omega$ is the Heller operator mapping an $A$-module to the kernel of 
a projective cover of that module, and $\underline{\Hom}_A$ 
is a quotient of $\Hom_A$ by homomorphisms that factor 
through a projective module. 
When $M$ is an $A$-bimodule and $A^e=A \otimes A^{op}$ is the enveloping 
algebra of $A$, the Tate-Hochschild cohomology of $A$ is:
\[ \displaystyle \widehat{\HH}^*(A,M):=\widehat{\Ext}^*_{A^e}(A,M) \cong 
\bigoplus_{n \in \mathbb{Z}} \underline{\Hom}_{A^e}(\Omega^n A, M) \cong 
\bigoplus_{n \in \mathbb{Z}} \underline{\Hom}_{A^e}(A, \Omega^{-n} M). \] 
In particular, by Theorem 7.2 in \cite{Nguyen}, $\widehat{\HH}^*(A,A)$ 
is isomorphic to $\widehat{\Ho}^*(A,A^{ad})$, where $A^{ad}$ is the left 
adjoint module of $A$. Using this relation, if the module $M=A^{ad}$ has 
the required hypotheses as in Sections \ref{sec:bdd fg submodules} and 
\ref{fg Tate cohomology}, then the corresponding finite generation results 
also hold for Tate-Hochschild cohomology of $A$. 

Since a symmetric algebra $A$ is isomorphic to its $\bk$-dual $\Hom_\bk(A,\bk)$ 
as $A-A$-bimodules, we obtain the Tate duality for symmetric algebras as a 
special case of Auslander-Reiten duality, see \cite[\S 2]{Lin}. We will 
use this Tate duality throughout our paper. Recall that 
for a finite dimensional Hopf algebra $A$, it is symmetric if and only if 
$A$ is unimodular and $S^2$ is inner \cite[\S 2.5]{Lorenz}. There are many 
finite dimensional symmetric Hopf algebras 
that are of interest, such as, group algebras of finite groups, 
commutative Hopf algebras (this includes the $\bk$-duals of co-commutative 
Hopf algebras), semisimple algebras, the Drinfield double of any Hopf algebra, 
the restricted universal enveloping algebra $V(\g)$ of a finite dimensional 
restricted $p$-Lie algebra $\g$ when $\g$ is nilpotent or semisimple, 
and an algebra defined by Radford in \cite{R}. 
Therefore, our finite generation of Tate cohomology 
results will add to the study of these algebras. 

The paper is organized as follows. In Section \ref{sec:bdd fg submodules}, 
we study $A$-modules whose Tate cohomology is not finitely generated. The key 
ingredients in this section are the boundedness conditions on finitely 
generated modules over Tate cohomology and the property that products 
in negative Tate cohomology of symmetric algebras are often zero \cite[\S 8]{Lin}. 
In Section \ref{fg Tate cohomology}, we study $A$-modules whose Tate cohomology 
is finitely generated. In particular, all modules in the connected component 
of the stable Auslander-Reiten quiver associated to $A$ which contains 
$\bk$ have this property. In Sections \ref{Radford's algebra} and 
\ref{V(sl(2,k))}, we apply some results from Section \ref{sec:bdd fg submodules} 
on the algebra constructed by Radford in \cite{R} and on the restricted 
universal enveloping algebra of $\SL$, respectively. By showing 
that these algebras have finitely generated usual cohomology 
but fail to do so for the Tate cohomology, these examples reassure 
the fact that finite generation behaves differently in negative 
cohomology. 

The author thanks S. Witherspoon for her insightful comments and support during 
the preparation of this manuscript. The author also thanks J. Pevtsova 
and her graduate student J. Stark for suggesting helpful references for 
the completion of Section \ref{V(sl(2,k))}.

%%%%%%%%%%%%%%%%%%%%%%%%%%%%%%%%%%%%%%%%%%%%
%%%%%%%%%%%%%%%%%%%%%%%%%%%%%%%%%%%%%%%%%%%%

\section{Modules with Bounds in Finitely Generated Submodules}
\label{sec:bdd fg submodules}
In this section, we show that there are $A$-modules whose Tate cohomology is 
not finitely generated. We recall some definitions and properties that were 
proved in \cite{CCM} for group algebras. The same proofs go through 
for any finite dimensional symmetric (Hopf) algebra $A$ over a field $\bk$. 
We present them here for completeness. 

\begin{definition}
\label{CCM3.1}
{\em A graded module $C=\bigoplus_{n \in \mathbb{Z}} C^n$ over 
$\widehat{\Ho}^*(A,\bk)$ has {\it bounded finitely generated submodules} if 
for any $m$, there exists a number $N=N(m)$ such that the submodule 
$D$ of $C$ generated by $\bigoplus_{n>m} C^n$ is contained 
in $\bigoplus_{n > N} C^n$. }
\end{definition}

\begin{lemma} 
\label{CCM3.2}
If a graded module $C=\bigoplus_{n \in \mathbb{Z}} C^n$ over 
$\widehat{\Ho}^*(A,\bk)$ has bounded finitely generated submodules 
and if $C^n \neq {0}$ for arbitrary small (meaning negative) 
values of $n$, then $C$ is not a finitely generated module over 
$\widehat{\Ho}^*(A,\bk)$. 
\end{lemma}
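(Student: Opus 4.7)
The plan is to argue by contradiction, leveraging the tension between the boundedness hypothesis on finitely generated submodules and the unboundedness of the support of $C$ from below. Assume toward a contradiction that $C$ is finitely generated over $\widehat{\Ho}^*(A,\bk)$; since the grading is compatible with the $\widehat{\Ho}^*(A,\bk)$-action, we may choose a finite set of homogeneous generators $c_1, \dots, c_k$ with $\deg c_i = d_i$.

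Set $m_0 = \min_{1 \le i \le k} d_i$ and pick any integer $m < m_0$. Then each generator $c_i$ lies in $\bigoplus_{n > m} C^n$, and so the $\widehat{\Ho}^*(A,\bk)$-submodule $D$ generated by $\bigoplus_{n > m} C^n$ contains every $c_i$. Since the $c_i$ generate $C$, this forces $D = C$. Now invoke the hypothesis of bounded finitely generated submodules applied to this particular $m$: there exists an integer $N = N(m)$ such that $D \subseteq \bigoplus_{n > N} C^n$. Combining the two inclusions yields $C \subseteq \bigoplus_{n > N} C^n$, i.e., $C^n = 0$ for all $n \le N$. This directly contradicts the assumption that $C^n$ is nonzero for arbitrarily small $n$, completing the proof.

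There is essentially no technical obstacle here; the argument is a one-line manipulation once one observes that a finitely generated graded module is automatically the submodule generated by any graded piece that contains all of its generators. The only subtlety worth flagging is the implicit use of the fact that Tate cohomology elements act with well-defined graded degree (including negative degree), which is what makes the boundedness hypothesis a nontrivial condition in the first place, rather than an automatic consequence of finite generation as it would be in the purely nonnegatively graded setting.
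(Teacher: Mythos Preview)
Your proof is correct and follows essentially the same approach as the paper's: any finitely generated submodule has its generators concentrated in degrees above some $m$, so by the boundedness hypothesis it is contained in $\bigoplus_{n>N}C^n$ for some $N$ and therefore cannot be all of $C$. The paper's proof is simply a one-sentence version of what you have spelled out in detail.
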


\begin{proof}
This follows immediately from the definition. Any finitely generated 
submodule of $C$ is contained in $\bigoplus_{n > N} C^n$ 
for some $N$, and hence, cannot generate all of $C$. 
\end{proof}

For a graded module $C=\bigoplus_{n \in \mathbb{Z}} C^n$, 
$C[s]=\bigoplus_{n \in \mathbb{Z}} C^{n+s}$ denotes a shift 
in $C$ by a degree $s$, for some integer $s$. 

\begin{lemma} 
\label{CCM3.4}
Suppose we have an exact sequence of $A$-modules: 
\[ 0 \rightarrow L \rightarrow M \rightarrow N \rightarrow 0 \]
which represents an element $\xi \in \Ext^1_A(N,L)$. Multiplication by $\xi$ induces a homomorphism 
$m_\xi: \widehat{\Ho}^*(A,N) \rightarrow \widehat{\Ho}^*(A,L)[1]$. 
Let $\K^*$ be the kernel of this map and $\I^*$ be the cokernel. 
Then we have an exact sequence of $\widehat{\Ho}^*(A,\bk)$-modules:
\[ 0 \rightarrow \I^* \rightarrow \widehat{\Ho}^*(A,M) \rightarrow \K^* \rightarrow 0. \]
Moreover, if $\K^*$ is not finitely generated over $\widehat{\Ho}^*(A,\bk)$, 
then neither is $\widehat{\Ho}^*(A,M)$.
\end{lemma}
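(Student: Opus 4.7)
The plan is to derive the whole lemma from the long exact sequence in Tate cohomology associated to the given short exact sequence. Applying $\widehat{\Ho}^*(A,-)$ to $0 \to L \to M \to N \to 0$ produces the long exact sequence
\[ \cdots \to \widehat{\Ho}^{n-1}(A,N) \xrightarrow{\delta} \widehat{\Ho}^n(A,L) \to \widehat{\Ho}^n(A,M) \to \widehat{\Ho}^n(A,N) \xrightarrow{\delta} \widehat{\Ho}^{n+1}(A,L) \to \cdots \]
whose maps are all homomorphisms of graded $\widehat{\Ho}^*(A,\bk)$-modules, since the long exact sequence is natural with respect to the Yoneda pairing that supplies the module structure.

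The first key step is to identify the connecting map $\delta$ with multiplication by $\xi$, that is, $\delta = m_\xi$. This is standard for ordinary Ext, where it follows from splicing the extension $\xi$ onto a projective resolution of $\bk$; the same splicing argument goes through on an $A$-complete resolution of $\bk$, so it transfers verbatim to $\widehat{\Ext}$. Once $\delta = m_\xi$ is in place, I would cut the long exact sequence into short exact pieces in the usual way,
\[ 0 \to \mathrm{coker}(m_\xi)^n \to \widehat{\Ho}^n(A,M) \to \ker(m_\xi)^n \to 0, \]
and assemble them across all $n$. Packaging the cokernel with the shift already built into $m_\xi: \widehat{\Ho}^*(A,N) \to \widehat{\Ho}^*(A,L)[1]$ yields $\I^*$, and the kernel is $\K^*$; both inherit a graded $\widehat{\Ho}^*(A,\bk)$-module structure, so we get the claimed short exact sequence of graded $\widehat{\Ho}^*(A,\bk)$-modules.

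The second assertion is then a one-line consequence via the contrapositive: a quotient of a finitely generated module over a graded ring is finitely generated, so if $\widehat{\Ho}^*(A,M)$ were finitely generated, its quotient $\K^*$ in the exact sequence would also be finitely generated, contradicting the hypothesis.

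The main thing to be careful about is the first step, namely, checking that the connecting map in the Tate long exact sequence really does agree with $m_\xi$ as a map of graded $\widehat{\Ho}^*(A,\bk)$-modules. In the ordinary cohomology setting this is classical, and since Tate cohomology is computed via a complete resolution on which the same Yoneda/splicing construction is available, the identification should go through without difficulty; the only bookkeeping is keeping the degree shift $[1]$ consistent with the convention that $\xi$ has degree $1$. Everything else is routine homological algebra.
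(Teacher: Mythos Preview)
Your proposal is correct and follows essentially the same approach as the paper: derive the long exact sequence in Tate cohomology, identify the connecting homomorphism with multiplication by $\xi$ as a degree-$1$ map of $\widehat{\Ho}^*(A,\bk)$-modules, and then read off the short exact sequence $0 \to \I^* \to \widehat{\Ho}^*(A,M) \to \K^* \to 0$; the last assertion is exactly the observation that quotients of finitely generated modules are finitely generated.
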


\begin{proof}
By the naturality of the long exact sequence on Tate cohomology \cite[\S 3.2]{Nguyen}, we have:
\[ \cdots \xrightarrow{m_\xi} \widehat{\Ho}^n(A,L) \rightarrow 
\widehat{\Ho}^n(A,M) \rightarrow \widehat{\Ho}^n(A,N) 
\xrightarrow{m_\xi} \widehat{\Ho}^{n+1}(A,L) \rightarrow \cdots \]
The collection of the maps $m_\xi$ in the long exact sequence is a 
map of degree $1$ of $\widehat{\Ho}^*(A,\bk)$-modules
\[ m_\xi: \widehat{\Ho}^*(A,N) \rightarrow \widehat{\Ho}^*(A,L)[1]. \]
The last statement is a consequence of the fact that quotient modules of 
finitely generated modules are finitely generated. 
\end{proof}

Now for $d>0$, let $\xi$ be a non-zero element in $\widehat{\Ho}^d(A,k)$. Then 
$\xi$ is represented by a homomorphism $\xi: \Omega^d \bk \rightarrow \bk$. 
Let $L_\xi$ be the kernel of that map. If $\xi = 0$, we define 
$L_\xi := \Omega^d \bk \oplus \Omega \bk$. We have an exact sequence:
\[ 0 \rightarrow L_\xi \rightarrow \Omega^d \bk \xrightarrow{\xi} \bk \rightarrow 0. \] 

In the corresponding long exact sequence on Tate cohomology
\[ \cdots \rightarrow \widehat{\Ho}^{n-1}(A,\bk) \rightarrow \widehat{\Ho}^n(A,L_\xi) \rightarrow 
\widehat{\Ho}^n(A,\Omega^d \bk) \xrightarrow{\xi} \widehat{\Ho}^n(A,\bk) \rightarrow \cdots \]
the homomorphism labeled $\xi$ is multiplication by $\xi$. That is, it is the degree $d$ map
\[ \xi: \widehat{\Ho}^*(A,\bk)[-d] \rightarrow \widehat{\Ho}^*(A,\bk). \]
As a result, as in Lemma \ref{CCM3.4}, we have an exact 
sequence of $\widehat{\Ho}^*(A,\bk)$-modules: 
\[ 0 \rightarrow \I^*[-1] \rightarrow \widehat{\Ho}^*(A,L_\xi) \rightarrow \K^*[-d] \rightarrow 0 \]
where $\K^*$ and $\I^*$ are the kernel and cokernel of multiplication by $\xi$, respectively. 

\begin{lemma} 
\label{CCM3.5}
Suppose that $\xi \in \widehat{\Ho}^*(A,\bk)$, $d>0$, is a regular element 
on the usual cohomology ring $\Ho^*(A,\bk)$. Then
\begin{enumerate}
 \item $\K^t = {0}$ for all $t \geq 0$, and
 \item $\I^t = {0}$ for all $t <0$. 
\end{enumerate} 
\end{lemma}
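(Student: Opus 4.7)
My plan is to handle the two parts separately. Part (1) will follow directly from the regularity of $\xi$ on $\Ho^*(A,\bk)$, with a small extra argument needed in degree zero, while part (2) will then follow from part (1) via Tate duality for symmetric algebras.

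For part (1), I would split into the cases $t>0$ and $t=0$. When $t>0$, the canonical ring homomorphism $\Ho^*(A,\bk) \to \widehat{\Ho}^*(A,\bk)$ is an isomorphism in both degree $t$ and degree $t+d$ (since $d>0$), so multiplication by $\xi$ on $\widehat{\Ho}^t(A,\bk) \to \widehat{\Ho}^{t+d}(A,\bk)$ is identified with the corresponding map $\Ho^t(A,\bk) \to \Ho^{t+d}(A,\bk)$, which is injective by the regularity hypothesis; hence $\K^t=0$. When $t=0$, the map $\Ho^0(A,\bk) \to \widehat{\Ho}^0(A,\bk)$ is still surjective. Given $x \in \widehat{\Ho}^0(A,\bk)$ with $\xi x = 0$, I lift $x$ to some $y \in \Ho^0(A,\bk)$. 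Then $\xi y \in \Ho^d(A,\bk)$ maps to $\xi x = 0$ under the isomorphism $\Ho^d(A,\bk) \cong \widehat{\Ho}^d(A,\bk)$, so $\xi y = 0$, and regularity forces $y=0$, whence $x=0$.

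For part (2), I invoke Tate duality for symmetric algebras from \cite{Lin}: a non-degenerate pairing gives an isomorphism $\widehat{\Ho}^{-n-1}(A,\bk) \cong \Hom_\bk(\widehat{\Ho}^n(A,\bk),\bk)$ for every $n \in \mathbb{Z}$, and this pairing is natural enough that multiplication by a homogeneous class $\xi \in \widehat{\Ho}^d(A,\bk)$ on one side corresponds (up to sign) to the $\bk$-transpose of multiplication by $\xi$ on the other. For $t<0$, the map $\xi\cdot: \widehat{\Ho}^{t-d}(A,\bk) \to \widehat{\Ho}^t(A,\bk)$ is therefore $\bk$-dual to $\xi\cdot: \widehat{\Ho}^{-t-1}(A,\bk) \to \widehat{\Ho}^{-t+d-1}(A,\bk)$. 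Since $-t-1 \geq 0$ when $t<0$, part (1) gives that this dual map is injective, and so the original map is surjective by $\bk$-linear duality, giving $\I^t=0$.

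The main obstacle in this plan is the compatibility of Tate duality with the $\widehat{\Ho}^*(A,\bk)$-module structure on both sides: one needs that the pairing intertwines multiplication by $\xi$ with its $\bk$-transpose. This is a standard feature of Tate duality for symmetric algebras and I would appeal to the treatment in \cite[\S 2]{Lin} rather than re-derive it. The $t=0$ boundary case in part (1) is minor but deserves explicit attention, since the ring map $\Ho^0(A,\bk) \to \widehat{\Ho}^0(A,\bk)$ is generally not injective.
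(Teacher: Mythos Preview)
Your proposal is correct and follows essentially the same approach as the paper: part~(1) comes straight from regularity (the paper simply declares this ``clear'' without your careful treatment of $t=0$), and part~(2) uses Tate duality for symmetric algebras together with part~(1). The only cosmetic difference is that the paper unpacks the duality argument as an explicit basis computation---choosing a basis $\alpha_1,\dots,\alpha_r$ of $\widehat{\Ho}^{-t-1}(A,\bk)$, using the associativity $\langle \beta_i\xi,\alpha_j\rangle=\langle\beta_i,\xi\alpha_j\rangle$ of the Tate pairing to produce dual elements $\beta_i\in\widehat{\Ho}^{t-d}(A,\bk)$ with $\beta_i\xi$ spanning $\widehat{\Ho}^t(A,\bk)$---whereas you phrase the same step as ``the $\bk$-dual of an injection is a surjection''; these are the same argument.
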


\begin{proof}
Since $\xi$ is regular on $\Ho^*(A,\bk)$, it is clear that $\K^t = {0}$ for all $t \geq 0$. 
It remains to prove the second part of the lemma. We recall the Tate 
duality for symmetric algebras, see \cite[\S 2 and Lemmas 8.1, 8.2]{Lin}, 
equivalently, there is a natural nondegenerate bilinear form 
\[ \left\langle -,- \right\rangle: \widehat{\Ho}^{n-1}(A,\bk) \times \widehat{\Ho}^{-n}(A,\bk) \rightarrow \bk \]
such that $\left\langle \zeta\eta, \tau \right\rangle = \left\langle \zeta, \eta\tau \right\rangle$. 
For $t<0$, let $\alpha_1, \ldots, \alpha_r$ be a $\bk$-basis for $\widehat{\Ho}^{-t-1}(A,\bk)$. 
Because multiplication by $\xi: \widehat{\Ho}^{-t-1}(A,\bk) \rightarrow \widehat{\Ho}^{-t+d-1}(A,\bk)$ 
is a monomorphism by part (1), the elements $\xi\alpha_1, \ldots, \xi\alpha_r$ are linearly independent. 
So there must exist elements $\beta_1, \ldots, \beta_r$ in $\widehat{\Ho}^{t-d}(A,\bk)$ such that 
for all $i$ and $j$, we have:
\[ \left\langle \beta_i, \xi\alpha_j \right\rangle = \left\langle \beta_i\xi, \alpha_j \right\rangle = \delta_{ij} \]
where $\delta_{ij}$ is the usual Kronecker delta. Thus, the element 
$\beta_1\xi, \ldots, \beta_r\xi$ must be linearly independent and 
hence must form a basis for $\widehat{\Ho}^t(A,\bk)$. This implies that multiplication 
by $\xi: \widehat{\Ho}^{t-d}(A,\bk) \rightarrow \widehat{\Ho}^t(A,\bk)$ 
is a surjective map, for all $t<0$. Hence, its cokernel $\I^t = 0$. 
\end{proof}

There are examples of algebras for which all products in 
negative cohomology are zero. In particular, this holds 
for finite dimensional symmetric algebras whose usual 
cohomology has depth greater than or equal to $2$. 
Recall that by the discussion in \cite{Suarez-Alvarez}, the Tate 
cohomology of a Hopf algebra is always graded-commutative. 
Hence, a homogeneous regular sequence must automatically be 
central and when the characteristic of $\bk$ is not two, 
it must consist of elements in even degrees. Theorems 3.5 in 
\cite{BJO} and 8.3 in \cite{Lin} --- both are generalizations of 
the group cohomology result in \cite{BeCa} --- independently prove the following:

\begin{theorem}
\label{negative Tate products}
Let $A$ be a finite dimensional symmetric algebra over 
a field $\bk$. Let $M$ be a finitely generated $A$-module. 
If the depth of the usual cohomology (resp. Hochschild cohomology) 
of $M$ is two or more, then the Tate cohomology 
(resp. Tate-Hochschild cohomology) 
of $M$ has zero products in negative cohomology.
\end{theorem}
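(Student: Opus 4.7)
The plan is to adapt the Benson--Carlson argument from \cite{BeCa} to the symmetric algebra setting. The two tools available are Tate duality for symmetric algebras with module coefficients (the nondegenerate pairing $\widehat{\Ho}^n(A,M) \times \widehat{\Ho}^{-n-1}(A,M^{\vee}) \to \bk$, where $M^{\vee}=\Hom_\bk(M,\bk)$, satisfying $\langle \zeta\eta,\tau\rangle=\langle\zeta,\eta\tau\rangle$) and the conversion of regularity in non-negative Tate degrees into surjectivity in negative Tate degrees, exactly as in Lemma \ref{CCM3.5}. The depth-at-least-two hypothesis supplies a regular sequence $\eta_1, \eta_2$ of positive-degree, central, homogeneous elements of $\Ho^*(A,\bk)$ on $\Ho^*(A,M)$ (of even degree when $\mathrm{char}(\bk)\neq 2$, by the graded-commutativity of $\widehat{\Ho}^*(A,\bk)$).

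First I would fix homogeneous $\alpha \in \widehat{\Ho}^{-s}(A,\bk)$ and $\beta \in \widehat{\Ho}^{-t}(A,M)$ with $s,t>0$ and reduce to showing $\alpha\beta=0$. The first technical step is to prove the module-coefficient analogue of Lemma \ref{CCM3.5}: for each $\eta_i$ of degree $d_i$, the multiplication map $\cdot\eta_i$ on $\widehat{\Ho}^*(A,M)$ is injective in non-negative degrees (by regularity) and surjective in negative degrees (obtained from the injectivity via the Tate duality pairing between $M$ and $M^{\vee}$, by the same linear-algebra argument as in the proof of Lemma \ref{CCM3.5}). Using surjectivity, I would write $\beta=\eta_1\beta_1$ with $\beta_1\in\widehat{\Ho}^{-t-d_1}(A,M)$, so that $\alpha\beta=\eta_1(\alpha\beta_1)$. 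To check $\alpha\beta=0$ it is enough, by Tate duality for $M$, to verify $\langle\zeta,\alpha\beta\rangle=0$ for every $\zeta\in\widehat{\Ho}^{s+t-1}(A,M^{\vee})$; associativity of the pairing rewrites this as $\langle\zeta\eta_1\alpha,\beta_1\rangle$, and the left factor $\zeta\eta_1\alpha$ now lives in Tate degree $t-1+d_1>0$, hence in the usual cohomology $\Ho^*(A,M^{\vee})$, where $\eta_1,\eta_2$ continue to form a regular sequence by duality with the one on $\Ho^*(A,M)$. Iterating by writing $\beta_1=\eta_2\beta_2$ and commuting factors past $\zeta$ traps the positive-degree factor inside arbitrarily high powers of the ideal $(\eta_1,\eta_2)\subset\Ho^*(A,M^{\vee})$, at which point the Koszul-type relation enforced by the regular sequence in positive degrees collapses the pairing to zero.

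The hard part is precisely this last orchestration of the two regular elements. A single regular element is not enough, as witnessed by the cyclic group of prime order, which has depth one and nonzero products in negative Tate cohomology; genuine length-two input is what converts repeated surjectivity into a vanishing statement. Making the Koszul-style collapse precise (that is, ruling out any nontrivial residue once the factor has been pushed into arbitrarily high powers of $(\eta_1,\eta_2)$) is the substantive technical content of \cite[Theorem~3.5]{BJO} and \cite[Theorem~8.3]{Lin}; I would expect either the Krull-dimension/Koszul presentation of the former or the stable-module approach of the latter to carry the iteration through to a clean vanishing conclusion, and the Tate--Hochschild case follows by running the same argument with $A$ replaced by $A^e$ and $M$ by the relevant bimodule.
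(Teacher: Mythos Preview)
The paper does not prove this theorem at all: it is quoted verbatim as a result of \cite[Theorem~3.5]{BJO} and \cite[Theorem~8.3]{Lin}, both generalizing \cite{BeCa}, with no argument supplied. So there is no paper's-own proof to compare your proposal against. Your sketch correctly names the two essential tools---Tate duality for symmetric algebras and the regularity-to-surjectivity conversion of Lemma~\ref{CCM3.5}---and follows the Benson--Carlson template, but you yourself defer the decisive step (the ``Koszul-style collapse'' that converts infinite divisibility by $\eta_1,\eta_2$ into actual vanishing) to those same references. In effect, both the paper and your proposal point to \cite{BJO} and \cite{Lin} for the substance.

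One caution on your reading of the statement. For a general $A$-module $M$, the object $\widehat{\Ho}^*(A,M)$ is only a module over $\widehat{\Ho}^*(A,\bk)$, not a ring, so ``products in negative cohomology of $M$'' must mean products in the graded ring $\widehat{\Ext}^*_A(M,M)$ (which is how \cite{BJO} and \cite{Lin} formulate it, and how the paper applies the result, always with $M=\bk$). Your setup takes $\alpha\in\widehat{\Ho}^{-s}(A,\bk)$ acting on $\beta\in\widehat{\Ho}^{-t}(A,M)$, which is only a special case via the map $\widehat{\Ho}^*(A,\bk)\to\widehat{\Ext}^*_A(M,M)$; the depth hypothesis should likewise be read as depth of $\Ext^*_A(M,M)$ (or of $\Ho^*(A,M)$ as an $\Ho^*(A,\bk)$-module), and your appeal to a regular sequence living in $\Ho^*(A,\bk)$ and acting regularly on $\Ho^*(A,M)$ needs that translation made explicit. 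This does not break the argument, but it does mean the iteration and the duality pairing should be run inside $\widehat{\Ext}^*_A(M,M)$ rather than the mixed $\bk$/$M$ setup you wrote down.
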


We show that using this property, for some $A$-module $M$, 
$\widehat{\Ho}^*(A,M)$ is not finitely generated.

\begin{proposition}
\label{CCM3.7}
Suppose $A$ is a finite dimensional symmetric Hopf algebra over 
a field $\bk$ and $\widehat{\Ho}^*(A,\bk)$ has the property 
that the products in negative cohomology are zero. If 
$\xi \in \Ho^d(A,\bk)$, $d>0$, is a regular element for $\Ho^*(A,\bk)$, 
then $\widehat{\Ho}^*(A,L_\xi)$ is not finitely generated 
as a module over $\widehat{\Ho}^*(A,\bk)$. 
\end{proposition}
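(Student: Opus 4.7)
The plan is to apply Lemma~\ref{CCM3.2} to $C = \widehat{\Ho}^*(A, L_\xi)$; this reduces the statement to checking two conditions: (a) $C$ has bounded finitely generated submodules, and (b) $C^n \neq 0$ for arbitrarily small $n$. Both will be extracted from the short exact sequence
\[
0 \to \I^*[-1] \to \widehat{\Ho}^*(A, L_\xi) \to \K^*[-d] \to 0
\]
of $\widehat{\Ho}^*(A,\bk)$-modules recorded just before Lemma~\ref{CCM3.5}, together with the vanishings $\I^t = 0$ for $t < 0$ and $\K^t = 0$ for $t \geq 0$ established there.

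For (a), I will analyze the action of an arbitrary $\alpha \in \widehat{\Ho}^*(A,\bk)$ on a generator $\eta \in \widehat{\Ho}^n(A, L_\xi)$ with $n > m$. If $|\alpha| \geq 0$, then $\alpha \eta$ sits in degree $\geq n > m$. If $|\alpha| < 0$, then the image $\pi(\eta) \in \K^{n-d} \subseteq \widehat{\Ho}^{n-d}(A,\bk)$ either vanishes (when $n \geq d$, since $\K^t = 0$ for $t \geq 0$) or lies in negative Tate cohomology; in the latter case, the hypothesis on vanishing of negative products forces $\alpha \cdot \pi(\eta) = 0$. Either way $\pi(\alpha \eta) = \alpha \pi(\eta) = 0$, so $\alpha \eta$ lies in the subobject $\I^*[-1]$, which is concentrated in degrees $\geq 1$; hence $\alpha \eta$ is either zero or has degree $\geq 1$. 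Combining the two cases, the submodule generated by $\bigoplus_{n > m} C^n$ is contained in $\bigoplus_{n > \min(m, 0)} C^n$, giving the required bound.

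For (b), the short exact sequence yields $\widehat{\Ho}^n(A, L_\xi) \cong \K^{n-d}$ for $n \leq 0$, so it suffices to show $\K^{n-d} \neq 0$ for infinitely many $n \to -\infty$. By the Tate duality pairing used in the proof of Lemma~\ref{CCM3.5}, for $n \leq -1$ the map $\xi : \widehat{\Ho}^{n-d}(A,\bk) \to \widehat{\Ho}^n(A,\bk)$ is dual to the injective map $\xi : \Ho^{-n-1}(A,\bk) \to \Ho^{-n+d-1}(A,\bk)$, which gives
\[
\dim_\bk \K^{n-d} = \dim_\bk \bigl(\Ho^*(A,\bk)/(\xi)\bigr)^{-n+d-1}.
\]
Thus (b) is equivalent to $\Ho^*(A,\bk)/(\xi)$ being unbounded in degree. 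I will prove this by contradiction: if $\Ho^*(A,\bk)/(\xi)$ were finite-dimensional, regularity of $\xi$ would force $\Ho^*(A,\bk)$ to be a free $\bk[\xi]$-module of finite rank, and Tate cohomology would contain an inverse $\xi^{-1} \in \widehat{\Ho}^{-d}(A,\bk)$ whose square $\xi^{-2} \in \widehat{\Ho}^{-2d}(A,\bk)$ is nonzero, producing a nonzero product of two negative-degree elements and contradicting the hypothesis.

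The main obstacle is (b), specifically the passage from the hypothesis on vanishing negative products to the unboundedness of $\Ho^*(A,\bk)/(\xi)$; constructing the inverse $\xi^{-1}$ in the bounded (``periodic'') case requires some care with the localization structure of Tate cohomology. Once (a) and (b) are verified, Lemma~\ref{CCM3.2} immediately yields that $\widehat{\Ho}^*(A, L_\xi)$ is not finitely generated over $\widehat{\Ho}^*(A, \bk)$.
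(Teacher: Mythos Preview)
Your overall strategy is sound and close in spirit to the paper's. The paper takes a slightly shorter route: rather than applying Lemma~\ref{CCM3.2} directly to $C=\widehat{\Ho}^*(A,L_\xi)$, it applies it to $\K^*$ (showing $\K^*$ has bounded finitely generated submodules and is nonzero in infinitely many negative degrees), and then invokes Lemma~\ref{CCM3.4} to pass from $\K^*$ to $\widehat{\Ho}^*(A,L_\xi)$. This is cleaner for step~(a): since $\K^*\subseteq\widehat{\Ho}^{<0}(A,\bk)$ with module structure given by cup product, the hypothesis ``products in negative cohomology vanish'' immediately kills the action of $\widehat{\Ho}^{<0}(A,\bk)$ on $\K^*$, and nonnegative-degree elements can only raise degree. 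Your direct argument for~(a) via the exact sequence is correct but does a little extra bookkeeping that the paper avoids.

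The real issue is your plan for~(b). Your reduction is fine: $\widehat{\Ho}^n(A,L_\xi)\cong\K^{n-d}$ for $n\le 0$, and $\dim\K^{n-d}=\dim(\Ho^*(A,\bk)/(\xi))^{-n+d-1}$, so~(b) is equivalent to $\Ho^*(A,\bk)/(\xi)$ being infinite-dimensional. But your proposed contradiction---constructing $\xi^{-1}\in\widehat{\Ho}^{-d}(A,\bk)$ in the ``periodic'' case---does not work as stated. Freeness of $\Ho^*(A,\bk)$ over $\bk[\xi]$ does \emph{not} force $\xi$ to be a unit in $\widehat{\Ho}^*(A,\bk)$: Lemma~\ref{CCM3.5} only gives surjectivity of $\xi:\widehat{\Ho}^{t-d}\to\widehat{\Ho}^t$ for $t<0$ and injectivity for $t-d\ge 0$, with no control in the transitional range $-d\le t-d<0$. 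Concretely, if one had $\Ho^*(A,\bk)=\bk[\xi]$ with $\deg\xi=2$, Tate duality would give $\widehat{\Ho}^{-2}(A,\bk)\cong(\Ho^1)^*=0$, so there is no candidate for $\xi^{-1}$ at all; yet all negative products vanish for trivial degree reasons, and one computes that $\K^*$ and $\widehat{\Ho}^*(A,L_\xi)$ are finite-dimensional. So your inversion argument cannot by itself rule out this scenario.

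The paper handles this same point differently: it cites \cite[Lemma~8.2]{Lin} (equivalently the generalization of \cite[Lemma~2.1]{BeCa}) together with the assertion that $\dim\widehat{\Ho}^n(A,\bk)$ is unbounded as $n\to-\infty$, and deduces that $\K^*$ is nonzero in infinitely many negative degrees. That unboundedness is exactly the statement you are trying to prove by contradiction; you should either invoke the same external input as the paper does, or replace the $\xi^{-1}$ construction with a genuine argument that ``all negative products vanish'' is incompatible with bounded $\dim\Ho^n(A,\bk)$ (for instance, by producing an invertible element of positive degree from periodicity of $\bk$ in the stable category, which requires more than freeness over $\bk[\xi]$).
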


\begin{proof}
Let $\K^*$ be the kernel of the multiplication by $\xi$ on $\widehat{\Ho}^*(A,\bk)$. 
By Lemma \ref{CCM3.5}, we have shown that $\K^*$ has elements 
only in negative degrees. Moreover, products of elements in negative 
degrees are zero by assumption. 
By \cite[Lemma 8.2]{Lin} or a direct generalization of 
\cite[Lemma 2.1]{BeCa} and by the fact that there is no bound 
on the dimensions of the spaces $\widehat{\Ho}^n(A,\bk)$ for 
negative values of $n$, it follows that $\K^*$ is not zero 
in infinitely many negative degrees. Thus, $\K^*$ has bounded 
finitely generated submodules and is not finitely generated over 
$\widehat{\Ho}^*(A,\bk)$ by Lemma \ref{CCM3.2}. It follows from 
Lemma \ref{CCM3.4} that $\widehat{\Ho}^*(A,L_\xi)$ is not 
finitely generated over $\widehat{\Ho}^*(A,\bk)$. 
\end{proof}

We say that a cohomology element $\xi \in \widehat{\Ho}^d(A,\bk)$ annihilates 
the Tate cohomology of a module $M$ if the cup product with $\xi$ is the zero operator 
on $\widehat{\Ext}_A^*(M,M)$. We generalize the proof of \cite[Prop. 5.9.5]{Ben2} 
to a finite dimensional (symmetric) Hopf algebra $A$. 

\begin{proposition}
\label{Ben2p5.9.5}
Let $A$ be a finite dimensional (symmetric) Hopf algebra over a field $\bk$. 
Suppose $M$ is a finitely generated $A$-module and 
$\xi \in \widehat{\Ho}^d(A,\bk)$, for some $d \in \mathbb{Z}$. 
Then $\xi$ annihilates $\widehat{\Ext}_A^*(M,M)$ if and only if 
\[ L_\xi \otimes M \cong \Omega(M) \oplus \Omega^d(M) \oplus (\proj), \]
where $(\proj)$ denotes some projective $A$-module. 
\end{proposition}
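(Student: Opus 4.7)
The plan is to apply the tensor-product structure coming from the Hopf algebra $A$ to pull the defining sequence of $L_\xi$ back to one involving $M$, and then read off the equivalence from the resulting triangle in the stable module category. First I would tensor the short exact sequence $0\to L_\xi\to \Omega^d\bk\to \bk\to 0$ with $M$ over $\bk$; since $M$ is a $\bk$-vector space, tensoring is exact, and the Hopf algebra structure makes the outcome a short exact sequence of $A$-modules,
\[
0\to L_\xi\otimes M\to \Omega^d\bk\otimes M\to M\to 0.
\]
A second consequence of the Hopf hypothesis is that $P\otimes N$ is projective whenever $P$ is; applying this iteratively to a projective resolution of $\bk$ gives $\Omega^n\bk\otimes M\cong \Omega^n M\oplus(\proj)$ for every $n$, so the middle term above becomes $\Omega^d M\oplus Q$ with $Q$ projective.

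Next I would identify the stable-category triangle produced by this sequence, namely
\[
L_\xi\otimes M\to \Omega^d M\xrightarrow{\xi\cdot 1_M} M\to \Omega^{-1}(L_\xi\otimes M),
\]
whose middle map is the image of $\xi$ under the algebra map $\widehat{\Ho}^*(A,\bk)\to\widehat{\Ext}^*_A(M,M)$. Because that module structure factors through the unit $1_M$, the condition that $\xi$ annihilates $\widehat{\Ext}^*_A(M,M)$ is equivalent to $\xi\cdot 1_M=0$ in $\widehat{\Ext}^d_A(M,M)$.

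For the forward direction, if $\xi\cdot 1_M=0$ then the triangle has zero middle map, hence splits, giving a stable isomorphism $L_\xi\otimes M\cong \Omega M\oplus \Omega^d M$. Since $\Omega M$ and $\Omega^d M$ carry no projective summands, Krull--Schmidt--Azumaya for the artinian algebra $A$ upgrades this to an honest module decomposition $L_\xi\otimes M\cong \Omega(M)\oplus \Omega^d(M)\oplus(\proj)$. For the converse I would feed the assumed decomposition into the long exact sequence
\[
\cdots\to \widehat{\Ext}^{n-1}(M,M)\to \widehat{\Ext}^n(M,L_\xi\otimes M)\to \widehat{\Ext}^{n-d}(M,M)\xrightarrow{\xi\cdot}\widehat{\Ext}^n(M,M)\to\cdots
\]
and compare dimensions in each degree. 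Writing $r_n$ for the rank of the $n$-th multiplication-by-$\xi$ map, the splitting hypothesis together with exactness yields the identity $r_n+r_{n-1}=0$ for every $n$; non-negativity forces every $r_n$ to vanish, so $\xi$ annihilates $\widehat{\Ext}^*_A(M,M)$.

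The main obstacle is the identification of the triangle's middle map with multiplication by $\xi$. This uses that $-\otimes M$ is a triangulated endofunctor of the stable module category commuting with $\Omega$ up to projective summands, and that the induced map $\widehat{\Ho}^*(A,\bk)\to\widehat{\Ext}^*_A(M,M)$ is compatible with the module structure on $\widehat{\Ext}^*_A(M,M)$. Both facts rely essentially on the Hopf algebra structure of $A$, which is why the proposition is stated for Hopf algebras rather than arbitrary symmetric algebras.
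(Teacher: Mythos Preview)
Your argument is correct. The forward direction is essentially the paper's proof recast in the language of triangles: the paper tensors the translated sequence $0\to\bk\to\Omega^{-1}L_\xi\to\Omega^{d-1}\bk\to 0$ with $M$, observes that it represents $\xi\cdot 1_M$, and concludes that it splits when $\xi\cdot 1_M=0$; you do the same thing one rotation over.

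The converse, however, genuinely differs. The paper simply asserts that the hypothesis $L_\xi\otimes M\cong\Omega(M)\oplus\Omega^d(M)\oplus(\proj)$ forces the specific sequence $0\to\Omega(M)\to L_\xi\otimes M\to\Omega^d(M)\to 0$ to split, and then reads off $\xi\cdot 1_M=0$. Your route avoids that step entirely: feeding the decomposition into the long exact sequence for $\widehat{\Ext}^*_A(M,-)$ and comparing $\bk$-dimensions degree by degree yields $r_n+r_{n-1}=0$, hence every multiplication-by-$\xi$ map vanishes. This is a cleaner justification, since in general an abstract isomorphism between the middle term of a short exact sequence and the direct sum of its ends does \emph{not} force the sequence to split; your rank argument sidesteps that issue by exploiting the finite-dimensionality of each $\widehat{\Ext}^n_A(M,M)$. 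The cost is that your converse is less direct and uses a global dimension count rather than a single splitting, but what it buys is that no unjustified implication is needed.
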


\begin{proof}
We note here that it is necessary for $A$ to be a Hopf algebra so that 
a tensor product of $A$-modules is again an $A$-module with action 
via the coproduct of $A$. By abuse of notation, 
let $\xi: \Omega^d \bk \rightarrow \bk$ be a cocycle representing the cohomology
element $\xi$. Let $L_\xi$ be its kernel. The proposition is obvious for $\xi=0$, 
as in this case, $L_\xi= \Omega^d \bk \oplus \Omega \bk$, and 
$\Omega^i(M) \cong \Omega^i \bk \otimes M \oplus (\proj)$ for any $i$. 

Assume $\xi \neq 0$. 
As before, we have an exact sequence:
\[ 0 \rightarrow L_\xi \rightarrow \Omega^d \bk \xrightarrow{\xi} \bk \rightarrow 0 \] 
By translating, we get the exact sequence:
\[ 0 \rightarrow \bk \rightarrow \Omega^{-1}(L_\xi) \rightarrow \Omega^{d-1} \bk \rightarrow 0 \]
that represents $\xi$ in $\widehat{\Ext}_A^1(\Omega^{d-1} \bk, \bk) \cong \widehat{\Ho}^d(A,\bk)$. 
Then we have that $\xi \cdot Id_M$ in $\widehat{\Ext}_A^d(M,M) \cong \widehat{\Ext}_A^1(\Omega^{d-1}(M),M)$ 
is represented by the sequence:
\[ 0 \rightarrow M \rightarrow \Omega^{-1}(L_\xi) \otimes M \rightarrow \Omega^{d-1} \bk \otimes M \rightarrow 0. \]
Now suppose $\xi$ annihilates $\widehat{\Ext}_A^*(M,M)$, then 
$\xi \cdot Id_M = 0$ and the above sequence splits. Hence, 
\[ \Omega^{-1}(L_\xi) \otimes M \cong M \oplus (\Omega^{d-1} \bk \otimes M), \] 
the middle term is the direct sum of the two end terms. Equivalently,
\[ \Omega^{-1} (L_\xi \otimes M) \cong M \oplus \Omega^{d-1}(M) \oplus (\proj). \]
Now translate everything by $\Omega$, we have:
\[ L_\xi \otimes M \cong \Omega(M) \oplus \Omega^d(M) \oplus (\proj). \]
Conversely, if $L_\xi \otimes M \cong \Omega(M) \oplus \Omega^d(M) \oplus (\proj)$, then the sequence 
\[ 0 \rightarrow \Omega(M) \rightarrow L_\xi \otimes M \rightarrow \Omega^d(M) \rightarrow 0 \]
splits. Translate everything by $\Omega^{-1}$, we get the sequence that represents $\xi \cdot Id_M$ also splits. 
Hence $\xi \cdot Id_M = 0$ and $\xi$ annihilates the Tate cohomology of $M$. 
\end{proof}

We are now ready to prove the main theorem of this section.

\begin{theorem} 
\label{nonfgTate}
Suppose $A$ is a finite dimensional symmetric Hopf algebra over a field $\bk$ and 
$\widehat{\Ho}^*(A,\bk)$ has the property that the products in negative cohomology 
are zero. Let $\xi \in \Ho^d(A,\bk)$, $d>0$, be a regular element and $M$ be
a finitely generated $A$-module such that $\widehat{\Ho}^*(A,M) \neq 0$. 
If for some $t>0$, $\xi^t$ annihilates the Tate cohomology of $M$ 
and of $L_{\xi^t}$, then $\widehat{\Ho}^*(A,M)$ is not 
finitely generated as a module over $\widehat{\Ho}^*(A,\bk)$. 
\end{theorem}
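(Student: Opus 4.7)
Write $\zeta := \xi^t \in \Ho^{td}(A,\bk)$. Since a power of a non-zero-divisor in a graded-commutative ring is again a non-zero-divisor, $\zeta$ is still regular on $\Ho^*(A,\bk)$. Apply Proposition~\ref{CCM3.7} with $\zeta$ in place of $\xi$: this gives that $\widehat{\Ho}^*(A, L_\zeta)$ is \emph{not} finitely generated over $\widehat{\Ho}^*(A,\bk)$, and this is the contradiction I will ultimately target.

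The two annihilation hypotheses on $\zeta$ allow me to invoke Proposition~\ref{Ben2p5.9.5} twice, once to $M$ and once to $L_\zeta$, yielding the stable isomorphisms
\[
L_\zeta \otimes M \cong \Omega(M) \oplus \Omega^{td}(M) \oplus (\proj), \qquad L_\zeta \otimes L_\zeta \cong \Omega(L_\zeta) \oplus \Omega^{td}(L_\zeta) \oplus (\proj).
\]
Because $\widehat{\Ho}^*(A, \Omega^k X) \cong \widehat{\Ho}^*(A, X)[-k]$ as graded $\widehat{\Ho}^*(A,\bk)$-modules and Tate cohomology vanishes on projectives, applying $\widehat{\Ho}^*(A,-)$ produces the decompositions
\[
\widehat{\Ho}^*(A, L_\zeta \otimes M) \cong \widehat{\Ho}^*(A, M)[-1] \oplus \widehat{\Ho}^*(A, M)[-td],
\]
\[
\widehat{\Ho}^*(A, L_\zeta \otimes L_\zeta) \cong \widehat{\Ho}^*(A, L_\zeta)[-1] \oplus \widehat{\Ho}^*(A, L_\zeta)[-td].
\]
In particular, $\widehat{\Ho}^*(A, M)$ is finitely generated if and only if $\widehat{\Ho}^*(A, L_\zeta \otimes M)$ is, and the analogous equivalence holds with $L_\zeta$ in place of $M$. (As a sanity check, the second equivalence combined with Proposition~\ref{CCM3.7} forces $\widehat{\Ho}^*(A, L_\zeta \otimes L_\zeta)$ to be non-fg too.)

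Now suppose for contradiction that $\widehat{\Ho}^*(A, M)$ is finitely generated, so that $\widehat{\Ho}^*(A, L_\zeta \otimes M)$ is finitely generated. The key (and main obstacle) is to transfer this back to $\widehat{\Ho}^*(A, L_\zeta)$: since $\widehat{\Ho}^*(A, M) \neq 0$, $M$ is non-projective, and the Hopf tensor structure of $A$ then makes $\bk$ a direct summand of $M \otimes M^*$ in the stable category (this is precisely why the Hopf hypothesis is needed, as flagged in the proof of Proposition~\ref{Ben2p5.9.5}). Tensoring with $L_\zeta$ exhibits $L_\zeta$ as a stable summand of $L_\zeta \otimes M \otimes M^*$, so $\widehat{\Ho}^*(A, L_\zeta)$ is a direct summand of $\widehat{\Ho}^*(A, L_\zeta \otimes M \otimes M^*)$. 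Tensoring the first decomposition above with $M^*$ and using $\widehat{\Ho}^*(A, M \otimes M^*) \cong \widehat{\Ext}^*_A(M, M)$ gives
\[
\widehat{\Ho}^*(A, L_\zeta \otimes M \otimes M^*) \cong \widehat{\Ext}^*_A(M, M)[-1] \oplus \widehat{\Ext}^*_A(M, M)[-td].
\]
The crux is therefore to show that finite generation of $\widehat{\Ho}^*(A, M)$ passes to $\widehat{\Ext}^*_A(M, M)$; once this is achieved, $\widehat{\Ho}^*(A, L_\zeta)$ is exhibited as a direct summand of a finitely generated module and hence is itself finitely generated, contradicting Proposition~\ref{CCM3.7}.
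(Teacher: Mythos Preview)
Your proposal has two genuine gaps, one of which you yourself flag.

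First, the claim that $\bk$ is a stable summand of $M \otimes M^*$ whenever $M$ is non-projective is false. For a finite $p$-group $G$ of $p$-rank at least $2$ and $M$ a non-projective periodic module, the support variety of $M \otimes M^*$ equals $V_G(M)$, a line, whereas $V_G(\bk)$ is the full cone; hence $\bk$ cannot be a stable summand of $M \otimes M^*$. The standard fact you may be thinking of is only that $M$ is a summand of $M \otimes M^* \otimes M$, which is strictly weaker and does not let you split off $L_\zeta$ from $L_\zeta \otimes M \otimes M^*$.

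Second, you explicitly leave open the implication ``finite generation of $\widehat{\Ho}^*(A,M)$ passes to $\widehat{\Ext}^*_A(M,M)$.'' There is no reason to expect this: $\widehat{\Ext}^*_A(M,M) \cong \widehat{\Ho}^*(A, M \otimes M^*)$, and $M \otimes M^*$ is in general far more complicated than $M$. So even granting the (false) summand claim, the argument is incomplete at exactly the point you identify as the crux.

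The paper sidesteps both issues by working not with finite generation directly but with the auxiliary notion of having \emph{bounded finitely generated submodules} (Definition~\ref{CCM3.1}). It shows, via your second decomposition together with $(L_\zeta)^* \cong \Omega^{-td-1}L_\zeta$, that $\widehat{\Ext}^*_A(L_\zeta, L_\zeta)$ is a shifted sum of copies of $\widehat{\Ho}^*(A,L_\zeta)$ and hence has bounded finitely generated submodules by the argument of Proposition~\ref{CCM3.7}. The key observation replacing your summand step is that the $\widehat{\Ho}^*(A,\bk)$-action on $\widehat{\Ho}^*(A, L_\zeta \otimes M)$ factors through the ring map $\widehat{\Ho}^*(A,\bk) \to \widehat{\Ext}^*_A(L_\zeta, L_\zeta)$ given by $-\otimes L_\zeta$. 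This factorization transfers the boundedness property to $\widehat{\Ho}^*(A, L_\zeta \otimes M)$, and then your first decomposition pushes it down to $\widehat{\Ho}^*(A,M)$; Lemma~\ref{CCM3.2} finishes. No control over $\widehat{\Ext}^*_A(M,M)$ and no splitting of $\bk$ out of $M\otimes M^*$ is needed.
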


\begin{proof}
By assumption, $\widehat{\Ho}^*(A,M) \neq 0$, 
so by Lemma \ref{CCM3.2}, it is enough to show that 
$\widehat{\Ho}^*(A,M)$ has bounded finitely generated submodules. 

Now since $\xi^t$ annihilates the Tate cohomology of $M$ for some $t>0$, 
it follows from Proposition \ref{Ben2p5.9.5} that 
\[ L_{\xi^t} \otimes M \cong \Omega(M) \oplus \Omega^{dt}(M) \oplus (\proj). \]
Thus, $\widehat{\Ho}^*(A,M)$ has bounded finitely generated submodules if and only if 
$\widehat{\Ho}^*(A,L_{\xi^t} \otimes M)$ also has this property.

We first recall that for left $A$-modules $M$ and $N$, 
$\Hom_\bk(M,N)$ is a left $A$-module via the action: 
$(a \cdot f)(m)= \sum a_1 f(S(a_2)m)$, for $a \in A, m \in M$, 
and $f \in \Hom_\bk(M,N)$. When $N=\bk$ is the trivial 
$A$-module, the above action simplifies to the action of $A$ 
on $M^*:=\Hom_\bk(M,\bk)$: $(a \cdot f)(m)=f(S(a)m)$. 
Moreover, when $M$ and $N$ are finite dimensional as $\bk$-vector spaces, 
$\Hom_\bk(M,N) \cong N \otimes M^*$ as left $A$-modules, \cite[\S 2.1]{Lorenz}. 
We let $\widehat{\Ho}^*(A,\bk) \cong \widehat{\Ext}_A^*(\bk,\bk)$ act 
on $\widehat{\Ext}_A^*(M,M)$ via $- \otimes M$. 
By \cite[Cor. 3.1.6]{Ben1}, Proposition \ref{Ben2p5.9.5}, 
and the hypothesis that $\xi^t$ annihilates the Tate 
cohomology of $L_{\xi^t}$, we have:
\begin{eqnarray*}
\widehat{\Ext}_A^*(L_{\xi^t}, L_{\xi^t}) & \cong & \bigoplus_{n \in \mathbb{Z}} \underline{\Hom}_A(\Omega^n(L_{\xi^t}),L_{\xi^t}) \\
& \cong & \bigoplus_{n \in \mathbb{Z}} \underline{\Hom}_A(\Omega^n \bk \otimes L_{\xi^t},L_{\xi^t}) \\
& \cong & \bigoplus_{n \in \mathbb{Z}} \underline{\Hom}_A(\Omega^n \bk, \Hom_\bk(L_{\xi^t},L_{\xi^t})) \\
& \cong & \bigoplus_{n \in \mathbb{Z}} \underline{\Hom}_A(\Omega^n \bk, L_{\xi^t} \otimes (L_{\xi^t})^*) \\
& \cong & \bigoplus_{n \in \mathbb{Z}} \underline{\Hom}_A(\Omega^n \bk, L_{\xi^t} \otimes \Omega^{-dt-1}L_{\xi^t}) \\
& \cong & \bigoplus_{n \in \mathbb{Z}} \underline{\Hom}_A(\Omega^n \bk, \Omega^{-dt-1}(L_{\xi^t} \otimes L_{\xi^t})) \\
& \cong & \bigoplus_{n \in \mathbb{Z}} \underline{\Hom}_A(\Omega^n \bk, \Omega^{-dt-1}(\Omega L_{\xi^t} \oplus \Omega^{dt} L_{\xi^t} \oplus (\proj))) \\
& \cong & \bigoplus_{n \in \mathbb{Z}} \underline{\Hom}_A(\Omega^n \bk, \Omega^{-dt} L_{\xi^t} \oplus \Omega^{-1} L_{\xi^t}) \\
& \cong & \widehat{\Ho}^*(A, \Omega^{-dt} L_{\xi^t} \oplus \Omega^{-1} L_{\xi^t})
\end{eqnarray*}
where $(L_{\xi^t})^* = \Hom_\bk(L_{\xi^t},\bk) \cong \Omega^{-dt-1}L_{\xi^t}$ 
by a generalization of \cite[Prop. 11.3.3]{CTVZ}.

As $\xi$ is a regular element, it is not hard to see that $\xi^t$ is also a regular element. 
By a similar argument as in Proposition \ref{CCM3.7}, we have that 
$\widehat{\Ext}_A^*(L_{\xi^t}, L_{\xi^t})$ has bounded finitely 
generated submodules. By definition, there exists a number $N$ such that
\[ \widehat{\Ho}^*(A,\bk) \cdot \bigoplus_{n \geq m} \widehat{\Ext}_A^n(L_{\xi^t},L_{\xi^t}) 
\subseteq \bigoplus_{n \geq N} \widehat{\Ext}_A^n(L_{\xi^t},L_{\xi^t}). \]

Now let $m$ be any integer. Let 
\[ \mathcal{N} := \bigoplus_{n \geq m} \widehat{\Ho}^n(A,L_{\xi^t} \otimes M). \] 
We observe that the action of 
$\widehat{\Ho}^*(A,\bk)$ on $\widehat{\Ho}^*(A,L_{\xi^t} \otimes M)$ 
via $- \otimes L_{\xi^t} \otimes M$ factors through the map 
$\widehat{\Ho}^*(A,\bk) \rightarrow \widehat{\Ext}_A^*(L_{\xi^t}, L_{\xi^t})$, 
and the target of that map has bounded finitely generated submodules.  
Thus, we have:
\begin{eqnarray*}
 \widehat{\Ho}^*(A,\bk) \cdot \mathcal{N} & \subseteq & \widehat{\Ho}^*(A,\bk) \cdot 
 \left(\bigoplus_{n \geq m} \widehat{\Ext}_A^n(L_{\xi^t},L_{\xi^t})\right)\left(\bigoplus_{n \geq m} \widehat{\Ho}^n(A,L_{\xi^t} \otimes M)\right) \\
 & \subseteq & \left(\bigoplus_{n \geq N} \widehat{\Ext}_A^n(L_{\xi^t},L_{\xi^t})\right)\left(\bigoplus_{n \geq m} \widehat{\Ho}^n(A,L_{\xi^t} \otimes M)\right) \\
 & \subseteq & \bigoplus_{n \geq m+N} \widehat{\Ho}^n(A,L_{\xi^t} \otimes M).
\end{eqnarray*}
Therefore, $\widehat{\Ho}^*(A,L_{\xi^t} \otimes M)$ has bounded 
finitely generated submodules, and so does $\widehat{\Ho}^*(A,M)$. 
\end{proof}

\begin{remark}{\em   %{\em....} is to undo the format, eg. italic, of what inside the bracket to the opposite format of what existing.
Suppose that the cohomology in even degrees $\Ho^{ev}(A,\bk)$ is 
finitely generated (so it is a finitely generated commutative algebra, 
since $\Ho^*(A,\bk)$ is graded-commutative) 
and for any finite dimensional $A$-module $M$, the $\Ho^{ev}(A,\bk)$-module 
$\Ext_A^*(M,M)$ is finitely generated. Then under this assumption, 
one can define the support varieties for modules as follows: 

Let $I_A(M,M)$ be the annihilator of the action of $\Ho^{ev}(A,\bk)$ on 
$\Ext_A^*(M,M)$, a homogeneous ideal of $\Ho^{ev}(A,\bk)$, and let 
$\mathcal{V}_A(M):=\mathcal{V}_A(M,M)$ denote the maximal ideal spectrum 
of the finitely generated commutative $\bk$-algebra $\Ho^{ev}(A,\bk)/I_A(M,M)$. 
As the ideal $I_A(M,M)$ is homogeneous, the variety $\mathcal{V}_A(M)$ 
is conical and is called {\it the support variety of $M$}. 

Then the hypothesis ``for some power $\xi^t$ of $\xi$, 
$\xi^t$ annihilates the Tate cohomology of $M$ and of $L_{\xi^t}$'' 
in Theorem \ref{nonfgTate} can be translated as 
$\mathcal{V}_A(M) \subseteq \mathcal{V}_A\left\langle \xi \right\rangle$ and
$\mathcal{V}_A(L_{\xi^t}) \subseteq \mathcal{V}_A\left\langle \xi \right\rangle$, 
where $\mathcal{V}_A\left\langle \xi \right\rangle$ is 
the support variety of the ideal generated by $\xi$. }
\end{remark}

%%%%%%%%%%%%%%%%%%%%%%%%%%%%%%%%%%%%%%%%%%%%%
%%%%%%%%%%%%%%%%%%%%%%%%%%%%%%%%%%%%%%%%%%%%%

\section{Modules with finitely generated Tate cohomology}
\label{fg Tate cohomology}

It is obvious that any module $M$ which is a direct sum of Heller translates $\Omega^i \bk$ 
has finitely generated Tate cohomology, as in this case, its Tate cohomology is a 
direct sum of copies of $\widehat{\Ho}^*(A,\bk)$:
\begin{eqnarray*}
 \widehat{\Ho}^*(A,M) & \cong & \bigoplus_{n \in \mathbb{Z}} \underline{\Hom}_A(\Omega^n \bk,M) 
 \cong \bigoplus_{n \in \mathbb{Z}} \underline{\Hom}_A(\Omega^n \bk, \bigoplus_i \Omega^i \bk) \\
 & \cong & \bigoplus_i \bigoplus_{n \in \mathbb{Z}} \underline{\Hom}_A(\Omega^n \bk, \Omega^i \bk) \\
 & \cong & \bigoplus_i \bigoplus_{n \in \mathbb{Z}} \underline{\Hom}_A(\Omega^{n-i} \bk, \bk)
 \cong \bigoplus_i \widehat{\Ho}^*(A,\bk).
\end{eqnarray*} 
In this section, we show that in general, there are more modules 
with this property. First, we consider the Tate cohomology of 
module $M$ which can occur as the middle term of an exact 
sequence of the form:
\[ 0 \rightarrow \Omega^m \bk \rightarrow M \rightarrow \Omega^n \bk \rightarrow 0 \]
for some $m, n \in \mathbb{Z}$. Such a sequence represents an element $\xi$ in 
\[ \widehat{\Ext}_A^1(\Omega^n \bk, \Omega^m \bk) \cong \widehat{\Ext}_A^{n+1-m}(\bk,\bk) \cong \widehat{\Ho}^{n+1-m}(A,\bk). \]
Without lost of generality, we can apply the shift operator 
$\Omega^{-m}$ and assume that the sequence has the form
\[ 0 \rightarrow \bk \rightarrow M \rightarrow \Omega^n \bk \rightarrow 0 \] 
for some $n$, and that $\xi \in \widehat{\Ho}^{n+1}(A,\bk)$.

\begin{theorem}
\label{fgTate}
Suppose that for the module $M$ and cohomology 
element $\xi$ as above, the map $\xi: \widehat{\Ho}^*(A,\bk) \rightarrow \widehat{\Ho}^*(A,\bk)$ 
given by multiplication by $\xi$ has a finite dimensional image. Suppose that the usual cohomology 
ring $\Ho^*(A,\bk)$ is Noetherian. Then the Tate cohomology $\widehat{\Ho}^*(A,M)$ is 
finitely generated as a module over $\widehat{\Ho}^*(A,\bk)$.
\end{theorem}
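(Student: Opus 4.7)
The plan is to apply Lemma~\ref{CCM3.4} to the short exact sequence $0 \to \bk \to M \to \Omega^n \bk \to 0$ representing $\xi$. This produces an exact sequence of $\widehat{\Ho}^*(A,\bk)$-modules
\[
0 \to \I^* \to \widehat{\Ho}^*(A,M) \to \K^* \to 0,
\]
where $\I^*$ is the cokernel and $\K^*$ is the kernel of the degree-one map $m_\xi: \widehat{\Ho}^*(A,\Omega^n \bk) \to \widehat{\Ho}^*(A,\bk)[1]$. Since any extension of two finitely generated graded modules is finitely generated, the argument reduces to showing that both $\I^*$ and $\K^*$ are finitely generated over $\widehat{\Ho}^*(A,\bk)$.

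First I would identify $\widehat{\Ho}^*(A, \Omega^n \bk)$ with the cyclic module $\widehat{\Ho}^*(A,\bk)[-n]$ via the standard isomorphism $\underline{\Hom}_A(\Omega^i\bk, \Omega^n \bk) \cong \widehat{\Ho}^{i-n}(A,\bk)$. Under this identification, $m_\xi$ becomes multiplication by $\xi$ on the shifted cyclic module. The cokernel $\I^*$ is therefore a cyclic quotient of $\widehat{\Ho}^*(A,\bk)[1]$, hence itself cyclic, hence finitely generated.

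The bulk of the work is to show that $\K^*$ is finitely generated. Under the identification above, $\K^*$ corresponds to the graded ideal $\mathrm{Ann}_{\widehat{\Ho}^*(A,\bk)}(\xi)$ shifted by $-n$, and the hypothesis that $\xi \cdot (-)$ has finite-dimensional image says precisely that $\widehat{\Ho}^*(A,\bk)/\mathrm{Ann}(\xi) \cong \xi \widehat{\Ho}^*(A,\bk)$ is finite-dimensional. Consequently, $\K^*$ coincides with $\widehat{\Ho}^*(A,\bk)[-n]$ in all but finitely many ``exceptional'' degrees. I would assemble a candidate generating set from (i) a finite $\bk$-basis for $\K^*$ in each of those finitely many exceptional degrees, together with (ii) a finite set of generators for the graded ideal $\mathrm{Ann}_{\Ho^*(A,\bk)}(\xi) \subset \Ho^*(A,\bk)$, which exists because $\Ho^*(A,\bk)$ is Noetherian and this annihilator is a submodule of $\Ho^*(A,\bk)$ containing $\bigoplus_{j \gg 0} \Ho^j(A,\bk)$.

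The main obstacle will be verifying that these positive-degree and exceptional-degree generators, together with the $\widehat{\Ho}^*(A,\bk)$-action, also recover $\K^*$ in the infinitely many negative degrees in which it agrees with $\widehat{\Ho}^*(A,\bk)[-n]$. Since $\widehat{\Ho}^*(A,\bk)$ is not itself assumed Noetherian, a substantive argument is required here; the natural approach is to invoke Tate duality for the symmetric algebra $A$ (see \cite[\S 2]{Lin}), whose nondegenerate pairing $\widehat{\Ho}^{j}(A,\bk) \times \widehat{\Ho}^{-j-1}(A,\bk) \to \bk$ is compatible with the module action and lets one dualize the generation statement in high positive degrees into one in very negative degrees. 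Once $\I^*$ and $\K^*$ are both finitely generated, the conclusion that $\widehat{\Ho}^*(A,M)$ is finitely generated follows from the short exact sequence above.
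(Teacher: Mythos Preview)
Your overall architecture matches the paper's: apply Lemma~\ref{CCM3.4} to the sequence $0 \to \bk \to M \to \Omega^n\bk \to 0$, identify $\widehat{\Ho}^*(A,\Omega^n\bk)$ with $\widehat{\Ho}^*(A,\bk)[-n]$, observe that the cokernel $\I^*$ is trivially finitely generated, and reduce everything to the kernel $\K^*$, using the Noetherian hypothesis for the nonnegative part and Tate duality for the negative part.

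Where your proposal is thin is exactly the step you flag as ``the main obstacle.'' Saying that Tate duality ``lets one dualize the generation statement in high positive degrees into one in very negative degrees'' is not yet an argument; nondegeneracy of a pairing does not by itself convert a finite generating set in positive degrees into one that reaches arbitrarily negative degrees. The paper's mechanism is more concrete and worth internalizing. Since $\widehat{\Ho}^*(A,\bk)/\K^*$ is finite dimensional, one has $\K^j = \widehat{\Ho}^j(A,\bk)$ for all sufficiently large $j$; in such a degree one picks a \emph{single} element $\gamma \in \K^j$ that is regular on $\Ho^*(A,\bk)$. Tate duality (in the form of the nondegenerate, module-compatible pairing you cite, applied exactly as in the proof of Lemma~\ref{CCM3.5}) then shows that multiplication by $\gamma$ is \emph{surjective} as a map $\widehat{\Ho}^{m-j}(A,\bk) \to \widehat{\Ho}^m(A,\bk)$ for every $m<0$. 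Hence $\widehat{\Ho}^{m-j}(A,\bk)\cdot\gamma = \K^m$ for all $m<0$, so the single element $\gamma$, already in the nonnegative part $\mathcal{L}^* = \bigoplus_{m\ge 0}\K^m$, generates all of $\K^*$ in negative degrees over $\widehat{\Ho}^*(A,\bk)$. Combined with the finitely many $\Ho^*(A,\bk)$-generators of $\mathcal{L}^*$ coming from Noetherianity, this finishes the proof. Your candidate generating set (exceptional-degree bases plus ideal generators of $\mathrm{Ann}_{\Ho^*(A,\bk)}(\xi)$) would in the end work too, but only because it contains such a $\gamma$; making that explicit is the missing idea.
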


\begin{proof}
As in Lemma \ref{CCM3.4}, we have an exact sequence of $\widehat{\Ho}^*(A,\bk)$-modules: 
\[ 0 \rightarrow \I^* \rightarrow \widehat{\Ho}^*(A,M) \rightarrow \K^*[-n] \rightarrow 0 \]
for $\xi \in \widehat{\Ho}^{n+1}(A,\bk)$, $\K^*$ is the kernel of multiplication by $\xi$ 
on $\widehat{\Ho}^*(A,\bk)$ and $\I^*$ is the cokernel. By hypothesis, the image of 
multiplication by $\xi$ has finite total dimension. Hence, in all but a finite number of 
degrees $i$, multiplication by $\xi$ is the zero map. Clearly, $\I^*$ is finitely 
generated over $\widehat{\Ho}^*(A,\bk)$. So, $\widehat{\Ho}^*(A,M)$ is finitely 
generated over $\widehat{\Ho}^*(A,\bk)$ if and only if $\K^*$ has the same property. 

View $\K^*$ as a module over the usual cohomology ring $\Ho^*(A,\bk)$. The elements in 
non-negative degrees form a submodule $\mathcal{L}^*= \sum_{m \geq 0} \K^m$, which is 
finitely generated over $\Ho^*(A,\bk)$, since $\Ho^*(A,\bk)$ is Noetherian by assumption. 

Let $\mathcal{M}^*$ be the $\widehat{\Ho}^*(A,\bk)$-submodule of $\K^*$ generated by  $\mathcal{L}^*$. 
We want to show that $\mathcal{M}^* = \K^*$ therefore proving the finite generation of $\K^*$. 
For all $m \geq 0$, $\K^m \subseteq \mathcal{M}^*$ by construction. It remains to show 
$\K^m \subseteq \mathcal{M}^*$ for all $m<0$. 

Because the quotient of $\widehat{\Ho}^*(A,\bk)$ by $\K^*$ is finite dimensional, we must 
have that $\widehat{\Ho}^j(A,\bk) = \K^j$ for sufficiently large $j$. For some sufficiently 
large $j$, we can find an element $0 \neq \gamma \in \K^j$ which is a regular element for the 
usual cohomology ring $\Ho^*(A,\bk)$. By a generalized version of Lemma 3.5 in \cite{BeCa}, 
we know that multiplication by $\gamma$ is a surjective map:
\[ \gamma: \widehat{\Ho}^{m-j}(A,\bk) \rightarrow \widehat{\Ho}^m(A,\bk) \]
whenever $m<0$. Hence, for all $m<0$, we must have $\widehat{\Ho}^{m-j}(A,\bk)\gamma = 
\K^m$. Since $\gamma \in \mathcal{M}^*$, we get that $\K^m \subseteq \mathcal{M}^*$ 
for all $m<0$. Therefore, $\K^* = \mathcal{M}^*$ is finitely generated as a module over 
$\widehat{\Ho}^*(A,\bk)$. This proves the theorem. 
\end{proof}

\begin{remark}
{\em  %{\em....} is to undo the format, eg. italic, of what inside the bracket to the opposite format of what existing.
There are many examples of sequences satisfying the condition of the theorem above. 
In particular, it is often the case that multiplication by an element $\xi$ in negative 
degree has a finite dimensional image. An example is the element in degree $-1$ which 
represents the almost split sequence for the module $\bk$. In addition, if the depth of 
$\Ho^*(A,\bk)$ is two or more, then all products in negative cohomology are zero; and 
the principal ideal generated by any element in negative cohomology contains no non-zero 
elements in positive degrees, for example, by \cite[Lemma 8.2]{Lin} or 
a direct generalization of \cite[Lemma 2.1]{BeCa}. 
Hence, multiplication by any element $\xi$ in negative 
cohomology has a finite dimensional image.}
\end{remark}

\begin{corollary}
The middle term of the almost split sequence 
\[ 0 \rightarrow \Omega^2 \bk \rightarrow M \rightarrow \bk \rightarrow 0 \]
ending with $\bk$ has finitely generated Tate cohomology. 
\end{corollary}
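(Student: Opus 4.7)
The plan is to realize this as a direct application of Theorem \ref{fgTate}. The almost split sequence $0 \to \Omega^2 \bk \to M \to \bk \to 0$ represents a nonzero class
\[ \xi \in \widehat{\Ext}_A^1(\bk, \Omega^2 \bk) \cong \widehat{\Ho}^{-1}(A, \bk), \]
so $\xi$ sits in degree $-1$. To bring the sequence into the standard form $0 \to \bk \to M' \to \Omega^n \bk \to 0$ used in the setup preceding Theorem \ref{fgTate}, I would first apply the Heller shift $\Omega^{-2}$ throughout, obtaining an exact sequence with $n = -2$ and the same associated class $\xi \in \widehat{\Ho}^{n+1}(A, \bk)$.

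Next I would verify that multiplication by $\xi$ on $\widehat{\Ho}^*(A, \bk)$ has finite-dimensional image. This is precisely the example highlighted in the Remark immediately following Theorem \ref{fgTate}: the almost-split class for $\bk$ sits in negative degree, and the generalizations of the Benson--Carlson lemmas cited there (together with the vanishing of products in negative cohomology, which follows from the standing depth hypothesis on $\Ho^*(A, \bk)$) confine the image to finitely many degrees, each of which is finite dimensional. With the Noetherianness of $\Ho^*(A, \bk)$ already assumed in the theorem, Theorem \ref{fgTate} applies and gives finite generation of $\widehat{\Ho}^*(A, \Omega^{-2} M)$ over $\widehat{\Ho}^*(A, \bk)$.

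Finally, since the Heller operator induces a degree-shifting isomorphism of $\widehat{\Ho}^*(A, \bk)$-modules between $\widehat{\Ho}^*(A, M)$ and $\widehat{\Ho}^*(A, \Omega^{-2} M)$, finite generation transfers back to $\widehat{\Ho}^*(A, M)$. The main obstacle in the plan is the verification that multiplication by $\xi$ has finite-dimensional image: a generic degree $-1$ class need not enjoy this property, and one must exploit both the specific almost-split nature of the extension representing $\xi$ and the (implicit) depth hypothesis on $\Ho^*(A, \bk)$. Once these standing assumptions from the preceding Remark are granted, the corollary follows immediately from Theorem \ref{fgTate}.
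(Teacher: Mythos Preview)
Your overall strategy---reduce to Theorem \ref{fgTate} by showing that multiplication by the class $\xi\in\widehat{\Ho}^{-1}(A,\bk)$ has finite-dimensional image---is exactly right, and the Heller-shift bookkeeping is fine. The gap is in how you justify the finite-dimensional image. You invoke an ``implicit depth hypothesis'' on $\Ho^*(A,\bk)$ so that products in negative cohomology vanish, but no such hypothesis is in force in this section; the Remark after Theorem \ref{fgTate} lists the almost-split class and the depth-two situation as two \emph{separate} sources of examples, not as ingredients to be combined. With your argument as written, the corollary would only hold under an extra assumption the statement does not make.

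The paper's proof avoids this entirely by using the defining property of the almost split sequence itself: for any $N$, the connecting map
\[
\underline{\Hom}_A(N,\bk)\xrightarrow{\ \delta\ }\widehat{\Ext}^1_A(N,\Omega^2\bk)
\]
is nonzero if and only if $N\cong\bk$ \cite[Prop.~V.2.2]{ARS}, and this connecting map is precisely multiplication by $\xi$. Taking $N=\Omega^d\bk$ and identifying $\eta\in\widehat{\Ho}^d(A,\bk)$ with a map $\Omega^d\bk\to\bk$, one gets $\xi\eta=0$ whenever $d\neq 0$. Hence the image of multiplication by $\xi$ lies entirely in degree $-1$ and is automatically finite-dimensional, with no depth or vanishing-of-products assumption needed. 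Once you replace your appeal to the depth hypothesis with this direct use of the almost-split property, the rest of your argument goes through.
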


\begin{proof}
The almost split sequence corresponds to an element $\xi \in \widehat{\Ho}^{-1}(A,\bk)$. 
One of the defining properties of the almost split sequence is that for any module $N$, 
the connecting homomorphism $\delta$ in the corresponding sequence 
\[ \cdots \rightarrow \underline{\Hom}_A(N,M) \rightarrow \underline{\Hom}_A(N,\bk) \xrightarrow{\delta} 
\widehat{\Ext}^1_A(N, \Omega^2 \bk) \rightarrow \cdots \]
is non-zero if and only if $N \cong \bk$ \cite[Prop. V.2.2]{ARS}. This connecting 
homomorphism is multiplication by $\xi$. Now any element $\eta \in \widehat{\Ho}^d(A,\bk)$ 
is represented by a map $\eta: \Omega^d \bk \rightarrow \bk$. Therefore, we have $\xi\eta = 0$ 
whenever $d \neq 0$. This implies that multiplication by $\xi$ on $\widehat{\Ho}^*(A,\bk)$ has 
a finite dimensional image, and it follows from the above theorem that 
$\widehat{\Ho}^*(A,M)$ is finitely generated as a module over $\widehat{\Ho}^*(A,\bk)$. 
\end{proof}

\begin{remark}
{\em  %{\em....} is to undo the format, eg. italic, of what inside the bracket to the opposite format of what existing.
For an almost split sequence $0 \rightarrow P \rightarrow Q \rightarrow R \rightarrow 0$, 
we have $R \cong \text{TrD}(P)$, equivalently $P \cong \text{DTr}(R)$, 
see \cite[Prop. V.1.14]{ARS}. But for symmetric algebras, 
$\text{TrD} \cong \Omega^{-2}$ and $\text{DTr} \cong \Omega^2$. 
So for any indecomposable non-projective module $N$ over a symmetric algebra, 
Theorem V.1.15 in \cite{ARS} shows the existence of an almost split sequence 
\[ 0 \rightarrow \Omega^2 N \rightarrow M \rightarrow N \rightarrow 0. \] }
\end{remark}

\begin{proposition}
Let $N$ be a finitely generated indecomposable non-projective $A$-module that is not isomorphic 
to $\Omega^i \bk$ for any $i$. Consider the almost split sequence 
\[ 0 \rightarrow \Omega^2 N \rightarrow M \rightarrow N \rightarrow 0 \]
ending in $N$. If $N$ has finitely generated Tate cohomology, then so does the middle term $M$.
\end{proposition}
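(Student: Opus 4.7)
The plan is to apply Lemma~\ref{CCM3.4} to the almost split sequence $0 \to \Omega^2 N \to M \to N \to 0$, which represents a class $\xi \in \widehat{\Ext}_A^1(N, \Omega^2 N)$. The lemma yields a short exact sequence
\[
 0 \to \I^* \to \widehat{\Ho}^*(A, M) \to \K^* \to 0
\]
of $\widehat{\Ho}^*(A,\bk)$-modules, with $\K^*$ and $\I^*$ the kernel and cokernel of the multiplication map $m_\xi : \widehat{\Ho}^*(A, N) \to \widehat{\Ho}^*(A, \Omega^2 N)[1]$. The main goal is to show $m_\xi = 0$; once that is established, $\K^* = \widehat{\Ho}^*(A, N)$ and $\I^* = \widehat{\Ho}^*(A, \Omega^2 N)[1]$. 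Using the standard identification $\widehat{\Ho}^n(A,\Omega^2 N) \cong \widehat{\Ho}^{n-2}(A, N)$, both ends are graded shifts of $\widehat{\Ho}^*(A,N)$, which is finitely generated over $\widehat{\Ho}^*(A,\bk)$ by hypothesis. Since an extension of two finitely generated graded modules is finitely generated, $\widehat{\Ho}^*(A,M)$ is finitely generated over $\widehat{\Ho}^*(A,\bk)$.

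The crux is the vanishing of $m_\xi$, and I would argue it exactly as in the corollary immediately preceding this proposition. Any class in $\widehat{\Ho}^n(A, N)$ is represented by a morphism $\eta : \Omega^n \bk \to N$, and $m_\xi$ sends this class to the image of $\eta$ under the connecting map $\delta : \underline{\Hom}_A(\Omega^n \bk, N) \to \widehat{\Ext}_A^1(\Omega^n \bk, \Omega^2 N)$ coming from the almost split sequence ending in $N$. By the universal property of almost split sequences \cite[Prop.~V.2.2]{ARS}, $\delta(\eta)$ vanishes whenever $\eta$ is not a split epimorphism. Since $N$ is indecomposable, $\eta$ being a split epi would force $N$ to be a direct summand of $\Omega^n \bk$; the hypothesis $N \not\cong \Omega^i \bk$ for any integer $i$ rules this out, so $\delta(\eta) = 0$ for every such $\eta$ and $m_\xi$ vanishes identically.

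The main obstacle is precisely the last implication: passing from $N \not\cong \Omega^i \bk$ to the statement that $N$ is not an indecomposable direct summand of any Heller shift $\Omega^n \bk$. This is the same implicit step used in the proof of the preceding corollary, and it is where the hypothesis on $N$ does its work; once it is accepted, the short exact sequence from Lemma~\ref{CCM3.4} sandwiches $\widehat{\Ho}^*(A,M)$ between two shifts of $\widehat{\Ho}^*(A,N)$, and finite generation propagates from $N$ to $M$ without further difficulty.
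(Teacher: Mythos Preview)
Your proof is correct and follows essentially the same route as the paper: both show that the connecting homomorphism in the long exact sequence vanishes (via the defining property of almost split sequences together with $N\not\cong\Omega^i\bk$), so that $\widehat{\Ho}^*(A,M)$ is an extension of $\widehat{\Ho}^*(A,N)$ by $\widehat{\Ho}^*(A,\Omega^2 N)$. The ``obstacle'' you flag is not a real difficulty: since $A$ is symmetric, $\Omega$ is a self-equivalence of the stable category, so each $\Omega^n\bk$ is indecomposable non-projective, and hence $N$ being a summand of $\Omega^n\bk$ already forces $N\cong\Omega^n\bk$; the paper uses this implicitly and simply asserts $\delta=0$ because $\Omega^i\bk\not\cong N$.
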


\begin{proof}
For any $i \in \mathbb{Z}$, the connecting homomorphism $\delta$ in the corresponding sequence
\[ \cdots \rightarrow \underline{\Hom}_A(\Omega^i \bk,M) \rightarrow \underline{\Hom}_A(\Omega^i \bk,N) \xrightarrow{\delta} 
\widehat{\Ext}^1_A(\Omega^i \bk, \Omega^2 N) \rightarrow \cdots \]
is zero because $\Omega^i \bk \ncong N$. Hence, $\delta$ induces the zero map on Tate cohomology. 
So the long exact sequence in Tate cohomology breaks into short exact sequences:
\[ 0 \rightarrow \widehat{\Ho}^*(A,\Omega^2 N) \rightarrow \widehat{\Ho}^*(A,M) \rightarrow \widehat{\Ho}^*(A,N) \rightarrow 0. \]
It follows that if $\widehat{\Ho}^*(A,N)$ is finitely generated, then $\widehat{\Ho}^*(A,M)$
is also finitely generated. 
\end{proof}

Combining the last two results, we have the following theorem:
\begin{theorem}
\label{CCM5.4}
If a module in a connected component of the stable Auslander-Reiten 
quiver associated to $A$ has finitely generated 
Tate cohomology, then so does every module in that component. 
In particular, all modules 
in the connected component of the quiver which contains 
$\bk$ have finitely generated Tate cohomology.
\end{theorem}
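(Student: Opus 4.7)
The plan is to show that finite generation of Tate cohomology propagates along arrows of the stable Auslander--Reiten quiver, and is therefore constant on each connected component. The ``in particular'' statement then follows since $\widehat{\Ho}^*(A,\bk)$ is tautologically finitely generated over itself. Two preliminary observations make the propagation work: finite generation of Tate cohomology is preserved under the Heller operator (because $\widehat{\Ho}^*(A,\Omega^j X) \cong \widehat{\Ho}^*(A,X)[-j]$), and it is inherited by direct summands. In particular, every Heller translate $\Omega^i \bk$ has finitely generated Tate cohomology.

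The key local step is the claim that if $L$ and $N$ are indecomposable non-projective modules joined by an arrow $L \to N$ in the stable AR quiver, then $L$ has finitely generated Tate cohomology if and only if $N$ does. Using $\tau \cong \Omega^2$ for a symmetric algebra, $L$ appears as an indecomposable summand of the middle term $M_N$ of the almost split sequence
\[
0 \to \Omega^2 N \to M_N \to N \to 0,
\]
and dually $N$ appears as a summand of the middle term $M'_L$ of the almost split sequence $0 \to L \to M'_L \to \Omega^{-2} L \to 0$, which is itself the AS sequence ending at $\Omega^{-2} L$.

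To pass finite generation from $N$ to $L$, I split into cases. If $N \not\cong \Omega^i \bk$ for any $i$, the preceding proposition directly gives finite generation of $M_N$, hence of its summand $L$. If instead $N \cong \Omega^i \bk$, then applying $\Omega^{-i}$ to the sequence reduces it (up to projective summands) to the sequence $0 \to \Omega^2 \bk \to M \to \bk \to 0$ of the preceding corollary, whose middle term has finitely generated Tate cohomology; Heller invariance then transfers this property back to $M_N$, and in turn to $L$. The converse direction --- passing finite generation from $L$ to $N$ --- is handled symmetrically by running the same case analysis on $\Omega^{-2} L$ (which has finitely generated Tate cohomology whenever $L$ does), applied to the AS sequence ending at $\Omega^{-2} L$. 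Iterating this local equivalence along paths in the underlying undirected graph of the component yields the theorem.

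The main source of fuss is the bookkeeping around the excluded case $\Omega^i \bk$ in the preceding proposition; this is why Heller translates of $\bk$ must be treated separately, and the Heller invariance of finite generation is precisely what bridges the gap between the single sequence handled by the corollary and the AS sequences at arbitrary $\Omega^i \bk$.
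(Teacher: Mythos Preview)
Your argument is correct and is precisely the intended way to ``combine the last two results'' as the paper does (the paper gives no further proof beyond that phrase). You make explicit two points the paper leaves implicit: that finite generation passes to direct summands (so from the middle term $M_N$ to each predecessor $L$), and that propagation in the reverse direction along an arrow follows by applying the same argument to the almost split sequence ending at $\tau^{-1}L=\Omega^{-2}L$, using Heller invariance of finite generation. Your separate treatment of the case $N\cong\Omega^i\bk$ via the corollary plus Heller invariance is exactly what is needed to cover the excluded case of the proposition.
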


%%%%%%%%%%%%%%%%%%%%%%%%%%%%%%%%%%%%%%%%%%%%%
%%%%%%%%%%%%%%%%%%%%%%%%%%%%%%%%%%%%%%%%%%%%%

\section{Example: Radford's Algebra}
\label{Radford's algebra}

The following Hopf algebra $A$ is taken from \cite[Example 1]{R}. 
Let $N>1$ and $\bk$ be a field that contains a primitive $N$-th root of unity $\omega$. 
Let $A$ be an algebra generated over $\bk$ by 
elements $x, y, g$ subject to the relations:
\[ g^N=1, \hspace{0.5cm} x^N=y^N=0, \hspace{0.5cm} xg=\omega gx, \hspace{0.5cm} gy=\omega yg, \hspace{0.5cm} xy=\omega yx. \]
$A$ is of dimension $N^3$ and has the Hopf structure:
\begin{align*}
 \Delta(g) &= g \otimes g, &\varepsilon(g) &= 1, &S(g) &=g^{-1},\\
 \Delta(x) &= x \otimes g + 1 \otimes x, &\varepsilon(x) &= 0, &S(x) &=-xg^{-1}, \\
 \Delta(y) &= y \otimes g + 1 \otimes y, &\varepsilon(y) &= 0, &S(y) &=-yg^{-1}.
\end{align*}
It is shown in \cite{R} that $A$ is unimodular and $S^2$ is an inner 
automorphism. Hence, $A$ is symmetric. 

Let $Y=yg^{-1}$. Using the above relations, one can check that $x$ and $Y$ commute. 
Consider a subalgebra $B$ of $A$ generated by $x$ and $Y$ subject to the 
following relations:
\[ x^N = Y^N = 0, \hspace{0.5cm} xY=Yx. \]
In particular, $B$ is the truncated polynomial algebra which can be 
considered as the quantum complete intersection 
$\bk[x,Y]/(x^N, Y^N, xY-Yx) \cong \bk[x]/(x^N) \otimes \bk[Y]/(Y^N)$. 
Using the K\"{u}nneth Theorem, the cohomology of $B$ can be obtained by 
tensoring together the cohomology of $\bk[x]/(x^N)$ and the cohomology of $\bk[Y]/(Y^N)$. 
One can also construct a $B$-projective resolution of $\bk$ by taking the 
tensor product of the following periodic resolutions: 
\[ \cdots \xrightarrow{\cdot x^{N-1}} \bk[x]/(x^N) \xrightarrow{\cdot x} \bk[x]/(x^N) \xrightarrow{\cdot x^{N-1}} \bk[x]/(x^N) \xrightarrow{\cdot x} \bk[x]/(x^N) \xrightarrow{\varepsilon_x} \bk \rightarrow 0, \]
and 
\[ \cdots \xrightarrow{\cdot Y^{N-1}} \bk[Y]/(Y^N) \xrightarrow{\cdot Y} \bk[Y]/(Y^N) \xrightarrow{\cdot Y^{N-1}} \bk[Y]/(Y^N) \xrightarrow{\cdot Y} \bk[Y]/(Y^N) \xrightarrow{\varepsilon_Y} \bk \rightarrow 0,\]
where $\varepsilon_x(x)=0$ and $\varepsilon_Y(Y)=0$. This construction 
has been done in the literature, for example in \cite[\S 4]{MPSW}. 
Using the relations $xg=\omega gx$ and $gy=\omega yg$, 
we can see that Radford's algebra $A = B \# \kG$, 
where $G = \left\langle g \right\rangle$ acts on $B$ by automorphisms 
for which $x, Y$ are eigenvectors:
\[ gxg^{-1} = \;^g x = \omega^{-1} x, \hspace{0.5cm} \;^g Y = \omega Y. \]
Given basis elements $\{1_x,x\}$ of $\bk[x]/(x^N)$ 
and $\{1_Y,Y\}$ of $\bk[Y]/(Y^N)$, for $b=1_x, 1_Y,x$, or $Y$, 
we define the action of $g$ on the above resolutions as:
\begin{itemize}
 \item In degree $2i$, $g \cdot b := (^gb)$.
 \item In degree $2i+1$, $g \cdot b := \begin{cases} 
                                  \omega^{-1}(^gb), &\quad b=1_x,x \\
                                  \omega(^gb), &\quad b=1_Y,Y.
                                 \end{cases}$
\end{itemize}
One checks that this group action commutes with the differentials in each degree. 

The cohomology ring $\Ho^*(B,\bk)$ is generated by $\xi_j, \eta_i$, 
for $i,j \in \left\{1,2\right\}$, where deg$(\xi_j)=2$ and deg$(\eta_i)=1$, 
subject to the following relations: 
\[ \xi_1\xi_2=\xi_2\xi_1, \hspace{0.5cm} \eta_1\eta_2 = -\eta_2\eta_1, \hspace{0.5cm} \eta_i\xi_j=\xi_j\eta_i, \hspace{0.5cm} (\eta_i)^2=0, \]
(see, for example, \cite[Theorem 4.1]{MPSW}). 
Note that $\xi_1, \eta_1$ generate $\Ho^*(\bk[x]/(x^N),\bk)$ and 
$\xi_2, \eta_2$ generate $\Ho^*(\bk[Y]/(Y^N),\bk)$. 
If $N=2$, $\xi_i$ is a scalar multiple of $\eta_i^2$. 
As $A = B \# \kG$ and the characteristic of $\bk$ 
does not divide the order of $G$, we have:
\[ \Ho^*(A,\bk) \cong \Ho^*(B,\bk)^G, \]
the invariant ring under the above $G$-action defined at the chain level. 
By (4.2.1) in \cite{MPSW}, the induced action of $G$ 
on generators $\xi_j, \eta_i$ is given by: 
\[g \cdot \xi_j= \xi_j, \hspace{0.5cm} g \cdot \eta_1 = \omega \eta_1, \hspace{0.5cm} g \cdot \eta_2 = \omega^{-1} \eta_2. \]
Thus, $\Ho^*(A,\bk) \cong \bk[\xi_1, \xi_2]$, where deg$(\xi_j)=2$. 

The elements $\xi_1, \xi_2$ form a regular sequence on $\Ho^*(A,\bk)$. 
In fact, the depth of $\Ho^*(A,\bk)$ is $2$. By Theorem \ref{negative Tate products}, 
the Tate cohomology $\widehat{\Ho}^*(A,\bk)$ of $A$ has zero products 
in negative cohomology. Therefore, since each $\xi_j$ is a regular element 
on $\Ho^*(A,\bk)$, it follows from Proposition \ref{CCM3.7} that 
$\widehat{\Ho}^*(A,L_{\xi_j})$ is not finitely generated 
as a module over $\widehat{\Ho}^*(A,\bk)$, for $j=1,2$.

%%%%%%%%%%%%%%%%%%%%%%%%%%%%%%%%%%%%%%%%%%%%%
%%%%%%%%%%%%%%%%%%%%%%%%%%%%%%%%%%%%%%%%%%%%%

\section{Example: The restricted enveloping algebra of $\SL(\bk)$}
\label{V(sl(2,k))}

Let $\bk$ be an algebraically closed field of characteristic $p>3$. 
Let $\g:=\SL(\bk)$ be the restricted $p$-Lie algebra of $ 2 \times 2$ 
trace-zero matrices over $\bk$. As a $\bk$-module, 
$\g$ is generated by: 
\[e=\begin{pmatrix}
  0 & 1 \\
  0 & 0
  \end{pmatrix}, 
\hspace{0.5cm} f=\begin{pmatrix}
  0 & 0 \\
  1 & 0
  \end{pmatrix}, 
\hspace{0.5cm} h=\begin{pmatrix}
  1 & 0 \\
  0 & -1
  \end{pmatrix}, \] 
with the Lie algebra structure: 
\[ [h,f]=-2f, \hspace{0.5cm} [h,e]=2e, \hspace{0.5cm} [e,f]=h, \]
and the map $[p]: \g \rightarrow \g$ is given by:
\[ e^{[p]}=f^{[p]}=0, \hspace{0.5cm} h^{[p]}=h. \]

Let $V(\g)$ be the restricted enveloping algebra of $\g$. 
It is defined as the quotient algebra: 
\[ V(\g) := T(\g) / \left\langle X \otimes Y - Y \otimes X - [X,Y], X^{\otimes p} - X^{[p]}, X, Y \in \g \right\rangle, \]
equivalently,
\[ V(\g) = U(\g) / \left\langle X^{\otimes p} - X^{[p]}, X \in \g \right\rangle, \]
where $T(\g)$ is the tensor algebra and $U(\g)$ is 
the universal enveloping algebra of $\g$. 
$V(\g)$ is a finite dimensional, co-commutative Hopf algebra 
over $\bk$ with the Hopf structure: 
\[ \Delta(X) = X \otimes 1 + 1 \otimes X, \hspace{0.5cm} \varepsilon(X)=0, \hspace{0.5cm} S(X)=-X, \]
for all $X \in \g$. 
A {\it restricted $\g$-module} is a module $M$ of $\g$ on which $X^{[p]}$ 
acts as the $p$-th iterate of $X$, for any $X \in \g$. The category 
of restricted $\g$-modules is equivalent to the category of $V(\g)$-modules. 
From here, all $\g$-modules are assumed to be restricted and 
will be referred to as $V(\g)$-modules. 

To a restricted Lie algebra $\g$, we associate the {\it nullcone}
$\N=\N(\g)$ of $\g$, which is the closed subvariety of $\g$ consisting 
of all nilpotent elements. We also define the {\it restricted nullcone}
of $\g$ to be the subvariety 
\[ \N_1(\g) = \left\{X \in \g \; | \; X^{[p]}=0 \right\} \]
of $[p]$-nilpotent elements in $\g$. 

The cohomology $\Ho^*(V(\g),M)$ of $V(\g)$ with coefficients 
in a restricted $\g$-module $M$ is defined as the 
cohomology of the augmented algebra $V(\g)$ over $\bk$. 
There is a close relationship between the nullcone $\N(\g)$ and the 
cohomology $\Ho^*(V(\g),\bk)$ as described in the following theorem:

\begin{theorem}[\cite{FP1}] 
Let $G$ be a simple, simply connected algebraic group over an 
algebraically closed field $\bk$ of characteristic $p>0$. Assume 
that $G$ is defined and split over the prime field $\mathbb{F}_p$. 
Let $\g$ be the Lie algebra of $G$. 
If $p$ is greater than the Coxeter number of $G$, then 
there is a $G$-equivariant isomorphism of algebras: 
\[ \Ho^*(V(\g),\bk) \cong \bk[\N]^{(1)}, \] 
where $\bk[\N]$ is the coordinate ring of the nullcone $\N$ of $\g$, 
and $\bk[\N]^{(1)}$ means $\bk[\N]$ to be regarded as a $G$-module by 
composing the usual conjugation action of $G$ on $\bk[\N]$ 
with the Frobenius morphism $f: G \rightarrow G$. 
\end{theorem}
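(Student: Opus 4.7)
The plan is to construct a $G$-equivariant algebra map $\varphi \colon \bk[\N]^{(1)} \to \Ho^{2*}(V(\g), \bk)$ and then prove it is an isomorphism onto all of $\Ho^*(V(\g), \bk)$, via a spectral-sequence argument combined with Kostant-style invariant theory; the hypothesis $p > h$ will enter crucially at the collapse step.

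First I would construct $\varphi$ from restriction to nilpotent one-parameter restricted subgroups. For each $[p]$-nilpotent $X \in \g$ the restricted subalgebra it generates is isomorphic to $\bk[t]/(t^p)$, whose cohomology is periodic of period $2$ with a canonical generator in degree $2$. Restricting along this inclusion and globalizing assigns to $X$ a class $\chi_X \in \Ho^2(V(\g),\bk)$ whose dependence on $X$ is Frobenius-linear; extending multiplicatively yields a $G$-equivariant homomorphism $S^*(\g^{*(1)}) \to \Ho^{2*}(V(\g),\bk)$ that vanishes on the ideal of $\N$ inside $\g$, hence factors through the desired $\varphi$.

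Second, I would introduce the May-type multiplicative spectral sequence arising from the augmentation-ideal filtration on $V(\g)$. The associated graded Hopf algebra is the truncated polynomial algebra $S(\g)/(X^p : X \in \g)$, whose cohomology is computed directly by a tensor product of Koszul complexes to be $\Lambda^*(\g^*) \otimes S^*(\g^{*(1)})$, with $\g^*$ in cohomological degree $1$ and $\g^{*(1)}$ in degree $2$. Subsequent differentials encode the Lie bracket and the $[p]$-operator, and the entire spectral sequence is $G$-equivariant. Under $p > h$ these differentials become sufficiently controlled that the exterior generators $\Lambda^*(\g^*)$ pair off against a specific ideal of polynomial generators, leaving only a polynomial quotient of $S^*(\g^{*(1)})$ on $E_\infty$; in particular the odd cohomology of $V(\g)$ is forced to vanish.

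Third, I would identify the surviving polynomial quotient with $\bk[\N]^{(1)}$. The modular form of Kostant's theorem (valid precisely for $p > h$) asserts that $\bk[\g]^G$ is a polynomial algebra on basic invariants of degrees bounded by $h$, and that $\N$ is a complete intersection cut out by these positive-degree invariants. Matching the defining ideal of $\N$ with the ideal killed by the spectral-sequence differentials --- each forced by $G$-equivariance together with the nilpotence condition underlying the $\chi_X$ --- identifies $\mathrm{im}(\varphi)$ with $\bk[\N]^{(1)}$; injectivity of $\varphi$ is then checked by detecting classes on the rank-one restricted subalgebras $\bk[t]/(t^p) \hookrightarrow V(\g)$ indexed by the points of $\N$, in the spirit of Quillen-Carlson stratification. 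The principal obstacle is this third step combined with the differential analysis of step two: controlling the higher differentials so that exactly $\bk[\N]^{(1)}$ survives uses $p > h$ in an essential way, since below the Coxeter bound extra cohomology classes appear --- as already visible for $V(\SL)$ at small primes, revisited in Section \ref{V(sl(2,k))} --- and the statement genuinely fails.
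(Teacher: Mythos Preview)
The paper does not prove this theorem at all: it is quoted as a result of Friedlander and Parshall \cite{FP1} and immediately applied to identify $\Ho^*(V(\mathfrak{sl}_2),\bk)$ with $\bk[x,y,z]/(xy+z^2)$. There is therefore no ``paper's own proof'' against which to compare your proposal; supplying an independent proof here would be outside the scope of the present manuscript.

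For what it is worth, your sketch is broadly in the spirit of the actual Friedlander--Parshall argument: a $G$-equivariant map $S^*(\g^{*(1)})\to\Ho^{2*}(V(\g),\bk)$ is constructed, a May/Friedlander--Parshall spectral sequence with $E_1\cong\Lambda^*(\g^*)\otimes S^*(\g^{*(1)})$ coming from the $p$-filtration on $V(\g)$ is analyzed, and the hypothesis $p>h$ enters through modular Kostant-type results (Veldkamp's theorem) asserting that $\N$ is a reduced complete intersection cut out by the basic invariants. Your step-two claim that ``the exterior generators pair off against a specific ideal'' glosses over the most delicate point---showing the first nontrivial differential is essentially the Koszul differential for the defining ideal of $\N$ and that the sequence then collapses---but as an outline it is on target. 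Since the paper only cites the result, however, none of this belongs in the present text.
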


Recall that the usual basis for $\g=\SL(\bk)$ is $\left\{e,f,h\right\}$. 
Let $\left\{x,y,z\right\}$ be its dual basis. As an affine space, 
$\SL(\bk)$ can be identified with $\mathbb{A}^3$ and has coordinate 
ring $\bk[x,y,z]$. Since every $2 \times 2$ nilpotent matrix squares to $0$, 
we have 
\[ \begin{pmatrix}
  z & x \\
  y & -z
  \end{pmatrix}^2 = (xy+z^2)\begin{pmatrix}
  1 & 0 \\
  0 & 1
  \end{pmatrix} \] 
is zero whenever $xy+z^2$ is zero. Hence, independent of $p$, 
the nullcone $\N(\SL)$ is a quadric $\mathbb{A}^3$ defined 
by the equation $xy+z^2=0$. By the above theorem, 
\[ \Ho^*(\Vg,\bk) \cong \bk[\N(\SL)]^{(1)} \cong \bk[x,y,z]/(xy+z^2) \] 
is concentrated in even degrees as a graded ring. One can 
see that $\Ho^*(\Vg,\bk)$ has depth $2$. Moreover, 
by \cite{Schue}, $\Vg$ is symmetric. Therefore, we can apply 
Theorem \ref{negative Tate products} to conclude that 
the Tate cohomology $\widehat{\Ho}^*(\Vg,\bk)$ has zero products 
in negative cohomology. It then follows from Proposition \ref{CCM3.7} 
that for each regular element $\xi$ of $\Ho^*(\Vg,\bk)$, 
the Tate cohomology $\widehat{\Ho}^*(\Vg,L_{\xi})$ is not finitely generated 
as a module over $\widehat{\Ho}^*(\Vg,\bk)$.

%%%%%%%%%%%%%%%%%%%%%%%%%%%%%%%%%%%%%%%%%%%%%
%%%%%%%%%%%%%%%%%%%%%%%%%%%%%%%%%%%%%%%%%%%%%


\begin{thebibliography}{99}

\bibitem{ARS} M. Auslander, I. Reiten, S. Smal\o, \textit{Representation Theory of Artin algebras}, Cambridge Studies in Advanced Mathematics, Cambridge Univ. Press, \textbf{36} (1995).
\bibitem{BeCa} D. J. Benson and J. F. Carlson, ``Products in negative cohomology,'' J. Pure Appl. Algebra, \textbf{82} (1992), 107-129.
\bibitem{Ben1} D. J. Benson, \textit{Representations and Cohomology I: Basic representation theory of finite groups and associative algebras}, Cambridge Studies in Advanced Mathematics, Cambridge Univ. Press, \textbf{30} (1998).
\bibitem{Ben2} D. J. Benson, \textit{Representations and Cohomology II: Cohomology of groups and modules}, Cambridge Studies in Advanced Mathematics, Cambridge Univ. Press, \textbf{31} (1998).
\bibitem{BJO} P. A. Bergh, D. A. Jorgensen, and S. Oppermann, ``The negative side of cohomology for Calabi-Yau categories,'' preprint, arXiV:1208.5785v2, (2012).
\bibitem{CCM} J. F. Carlson, S. K. Chebolu. and J. Min\'{a}\u{c}, ``Finite generation of Tate cohomology,'' Representation Theory, \textbf{15} (2011), 244-257. 
\bibitem{CTVZ} J. F. Carlson, L. Townsley, L. Valero-Elizondo, and M. Zhang, \textit{Cohomology rings of finite groups: with an appendix, calculations of cohomology rings of groups of order dividing 64}, Kluwer, Dordrecht, (2003).
\bibitem{FP1} E. M. Friedlander and B. J. Parshall, ``Cohomology of Lie algebras and algebraic groups,'' Amer. J. Math., \textbf{108} (1986), 235–253.
\bibitem{Lin} M. Linckelmann, ``Tate duality and transfer in Hochschild cohomology,'' J. Pure Appl. Algebra, \textbf{217} (2013), no. 12, 2387-2399. 
\bibitem{Lorenz} M. Lorenz, ``Representations of finite-dimensional Hopf algebras,'' J. Algebra, \textbf{188} (1997), 476-505. 
\bibitem{MPSW} M. Mastnak, J. Pevtsova, P. Schauenburg and S. Witherspoon, ``Cohomology of finite dimensional pointed Hopf algebras,'' Proc. Lond. Math. Soc. (3) \textbf{100} (2010), no. 2, 377-404.
\bibitem{Nguyen} V. C. Nguyen, ``Tate and Tate-Hochschild cohomology for finite dimensional Hopf Algebras,'' J. Pure Appl. Algebra, \textbf{217} (2013), no. 10, 1967-1979. 
\bibitem{R} D. E. Radford, ``The order of the antipode of a finite dimensional Hopf algebra is finite,'' Amer. J. Math., \textbf{98} (1976), 333-355.
\bibitem{Schue} J. R. Schue, ``Symmetry for the enveloping algebra of a restricted Lie algebra,'' Proc. Amer. Math. Soc., \textbf{16} (1965), 1123-1124.
\bibitem{Suarez-Alvarez} M.\ Suarez-Alvarez, ``The Hilton-Eckmann argument for the anti-commutativity of cup products'', Proc.\ Amer.\ Math.\ Soc.,\ \textbf{132} (2004), no.\ 8, 2241--2246.

\end{thebibliography}
\end{document}